\newtheorem{thm}{Theorem}[section]
\newtheorem{lmm}[thm]{Lemma}
\newtheorem{prop}[thm]{Proposition}
\newcommand{\argmax}{\operatorname{argmax}}
\newcommand{\ee}{\mathbb{E}}
\newcommand{\pp}{\mathbb{P}}
\newcommand{\ra}{\rightarrow}
\newcommand{\rr}{\mathbb{R}}
\newcommand{\smallavg}[1]{\langle #1 \rangle}
\newcommand{\ep}{\epsilon}
\newcommand{\mm}{\mathcal{W}}
\newcommand{\mmm}{\widetilde {\mathcal W}}
\newcommand{\wf}{\widetilde{f}}
\newcommand{\WF}{{\widetilde F}}
\newcommand{\wh}{{\widetilde h}}
\begin{document}
\title[Large deviations for random graphs]{The large deviation principle for the Erd\H{o}s-R\'enyi random graph}
\author{Sourav Chatterjee}
\address{Courant Institute of Mathematical Sciences, New York University, 251 Mercer Street, New York, NY 10012}
\thanks{Sourav Chatterjee's research was partially supported by NSF grants DMS-0707054 and  DMS-1005312,   and a Sloan Research Fellowship}
\author{S. R. S. Varadhan}
\thanks{S.~R.~S.~Varadhan's research was partially supported by NSF grants DMS-0904701 and OISE-0730136.}
\keywords{Random graph, Erd\H{o}s-R\'enyi, graph limit, Szemer\'edi's lemma, large deviation principle}

\begin{abstract}
What does an Erd\H{o}s-R\'enyi graph look like when a rare event happens? This paper answers this question when $p$ is fixed and $n$ tends to infinity by establishing a large deviation principle under an appropriate topology. The formulation and proof of the main result uses the recent development of the theory of graph limits by Lov\'asz and coauthors and  Szemer\'edi's regularity lemma from graph theory. As a basic application of the general principle, we work out large deviations for the number of triangles in $G(n,p)$. Surprisingly, even this simple example yields an  interesting double phase transition. 
\end{abstract}
\maketitle

\section{Introduction}\label{intro}
\subsection{The Erd\H{o}s-R\'enyi graph}
Let $G(n,p)$ be the random graph on $n$ vertices where each edge is added independently with probability $p$. This model has been the subject of extensive investigations since the pioneering work of Erd\H{o}s and R\'enyi \cite{erdosrenyi60}, yielding a  large body of literature (see \cite{bollobas01, JLR00} for partial surveys). 

This paper studies the following basic aspect of Erd\H{o}s-R\'enyi graphs: What does the graph look like if one knows that some rare event has happened? One way to comprehensively answer this question is to formulate a large deviation principle for the Erd\H{o}s-R\'enyi graph, in the same way as Sanov's theorem \cite{sanov61} gives a large deviation principle for an i.i.d.\ sample. 

The setting of Sanov's theorem conforms naturally to the abstract theory of large deviations (see Chapter 6 in \cite{dembozeitouni98}) because i.i.d.\ samples can be thought of as random probability measures, allowing them to be viewed as random elements of a single topological space irrespective of the sample size. The first hurdle in formulating such a program for random graphs is in constructing a single abstract space in which all graphs can be embedded. Fortunately, this issue has been settled recently. In a sequence of papers \cite{borgsetal06, borgsetal08, borgsetal07, freedmanlovaszschrijver07, lovasz06, lovasz07, lovaszsos08, lovaszszegedy06, lovaszszegedy07, lovaszszegedy07b, lovaszszegedy09} Laszlo 
Lov\'asz with coauthors (listed here in order of frequency) V.~T.~S\'os, B.~Szegedy, C.~Borgs, J.~Chayes, K.~Vesztergombi, A.~Schrijver and M.~Freedman have developed a beautiful, unifying limit theory. (See also the related work of Diaconis and Janson \cite{diaconisjanson08} which traces this back to work of Aldous \cite{aldous81} and Hoover~\cite{hoover82}.) This sheds light on topics such as graph homomorphisms, Szemer\'edi's regularity lemma, quasi-random graphs, graph testing and extremal graph theory, and has even found applications in statistics and related areas (see e.g.~\cite{cds10}). Their theory has been developed for dense graphs (number of edges comparable with the square of number of vertices) but parallel 
theories for sparse graphs are beginning to emerge~\cite{bollobasriordan09}. 

\subsection{Graph limits and graphons}
The limit of a sequence of dense graphs can be defined as follows. We quote the definition verbatim from \cite{lovaszszegedy06} (see also \cite{borgsetal08, borgsetal07, diaconisjanson08}). Let $G_n$ be a sequence of simple graphs whose number of nodes tends to infinity. For every fixed simple graph $H$, let $|\hom(H, G)|$ denote the number of homomorphisms of $H$ into $G$ (i.e.\ edge-preserving maps $V(H) \ra V(G)$, where $V(H)$ and $V(G)$ are the vertex sets). This number is normalized to get the homomorphism density 
\begin{equation}\label{homdens}
t(H,G) := \frac{|\hom(H, G)|}{|V(G)|^{|V(H)|}}. 
\end{equation}
This gives the probability that a random mapping $V(H) \ra V(G)$ is a homomorphism. 

Suppose that the graphs $G_n$ become more and more similar in the sense that $t(H, G_n)$ tends to a limit $t(H)$ for every $H$. One way to define a limit of the sequence $\{G_n\}$ is to define an appropriate limit object from which the values $t(H)$ can be read off. 

The main result of \cite{lovaszszegedy06} (following the earlier equivalent work of Aldous~\cite{aldous81} and Hoover \cite{hoover82}) is that indeed there is a natural ``limit object'' in the form of a function $f\in \mm$, where $\mm$ is the space of all measurable functions from $[0,1]^2$ into $[0,1]$ that satisfy $f(x,y)=f(y,x)$ for all $x,y$.

Conversely, every such function arises as the limit of an appropriate graph sequence. This limit object determines all the limits of subgraph densities: if $H$ is a simple graph with $V(H) = [k] = 
\{1, \ldots, k\}$, then 
\begin{equation}\label{tfdef}
t(H,f) = \int_{[0,1]^k}\prod_{(i,j)\in E(H)} f(x_i, x_j) \;dx_1\cdots dx_k. 
\end{equation}
Here $E(H)$ denotes the edge set of $H$.  A sequence of graphs $\{G_n\}_{n\ge 1}$ is said to converge to $f$ if for every finite simple graph $H$,
\begin{equation}\label{gconv}
\lim_{n\ra \infty} t(H, G_n) = t(H,f).
\end{equation}
Intuitively, the interval $[0,1]$ represents a `continuum' of vertices, and $f(x,y)$ denotes the probability of putting an edge between $x$ and $y$. For example, for the Erd\H{o}s-R\'enyi graph $G(n,p)$, if $p$ is fixed and $n \ra \infty$, then the limit graph is represented by the function that is identically equal to $p$ on~$[0,1]^2$.

These limit objects, i.e.\ elements of $\mm$, are called `graphons' in \cite{lovaszszegedy06, borgsetal08, borgsetal07}. A finite simple graph $G$ on $\{1,\ldots,n\}$ can also be represented as a graphon $f^G$ is a natural way, by defining
\begin{equation}\label{wg}
f^G(x,y) = 
\begin{cases}
1 &\text{ if $(\lceil nx\rceil, \lceil ny\rceil)$ is an edge in $G$,}\\
0 &\text{ otherwise.} 
\end{cases}
\end{equation}
Note that this allows {\it all} simple graphs, irrespective of the number of vertices, to be represented as elements of a single abstract space, namely $\mm$. 

\subsection{The cut metric}
With the above representation, it turns out that the notion of convergence in terms of subgraph densities outlined above can be captured by an explicit metric on $\mm$, the so-called `cut distance' (originally defined for finite graphs by Frieze and Kannan \cite{friezekannan99}). We start with the space $\mm$ of measurable functions $f(x,y)$  on $[0,1]^2$ that  satisfy $0\le f(x,y)\le 1$ and $f(x,y)=f(y,x)$. We define the cut distance 
\begin{equation}\label{defcut}
d_\square (f,g) := \sup_{S,T\subseteq [0,1]} \biggl|\int_{S\times T} [f(x,y)-g(x,y)] dx dy\biggr|.
\end{equation}
We introduce in $\mm$ an equivalence relation. Let $\Sigma$ be the space of  measure preserving bijections $\sigma:[0,1]\ra[0,1]$. Say that $f(x,y)\sim g(x,y)$ if $f(x,y)=g_\sigma(x,y):=g(\sigma x, \sigma y)$ for some $\sigma\in\Sigma$. Denote by ${\widetilde g}$ the closure  in $(\mm, d_\Box)$ of  the  orbit $\{g_\sigma\}$. The quotient space is  denoted by $\mmm$ and $\tau$ denotes the natural map $g\to{\widetilde g}$. Since  $d_\Box$ is invariant under $\sigma$ one can define on $\mmm$, the natural distance $\delta_\Box$ by
$$
\delta_\Box({\widetilde f},{\widetilde g}):=\inf_\sigma d_\Box(f, g_\sigma)=\inf_\sigma d_\Box(f_\sigma, g)=\inf_{\sigma_1,\sigma_2}d_\Box(f_{\sigma_1}, g_{\sigma_2})
$$
making $(\mmm, \delta_\Box)$ into a metric space. To any  finite graph $G$, we associate $f^G$ as in \eqref{wg} and its orbit  ${\widetilde G}=\tau f^G= {\widetilde f}^G\in\mmm$. One of the key results of \cite{borgsetal08} is the following:
\begin{thm}[Theorem 3.8 in \cite{borgsetal08}]
A sequence of graphs $\{G_n\}_{n\ge 1}$ converges to a limit $f\in \mm$ in the sense defined in \eqref{gconv} if and only if $\delta_\Box({\widetilde G}_n, {\widetilde f}) \ra 0$ as $n \ra \infty$.
\end{thm}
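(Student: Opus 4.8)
The plan is to prove the two directions separately; the implication ``density convergence $\Rightarrow$ $\delta_\Box$-convergence'' is the substantial one, and I would deduce it from compactness of $(\mmm,\delta_\Box)$ rather than argue it directly. The first ingredient is a \emph{counting lemma}: for every finite simple graph $H$ the map $g\mapsto t(H,g)$ is Lipschitz from $(\mm,d_\Box)$ to $\rr$, with $|t(H,f)-t(H,g)|\le |E(H)|\,d_\Box(f,g)$. I would prove this by a telescoping argument — order the edges of $H$ and replace $f$ by $g$ one edge at a time in the integral \eqref{tfdef}; bounding a single replacement reduces, after integrating out the variables not incident to that edge, to estimating $\bigl|\int_{[0,1]^2}(f-g)(x,y)\,u(x)v(y)\,dx\,dy\bigr|$ for $u,v$ with values in $[0,1]$, which is $O(d_\Box(f,g))$. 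Since $t(H,\cdot)$ is invariant under $\Sigma$, replacing $g$ by $g_\sigma$ and optimizing over $\sigma$ shows $|t(H,f)-t(H,g)|\le|E(H)|\,\delta_\Box(\wf,\widetilde g)$; in particular $t(H,\cdot)$ descends to a $\delta_\Box$-continuous function on $\mmm$ and is unaffected by passing to the $d_\Box$-closure of an orbit. Taking $g=f^{G_n}$ gives the ``only if'' direction at once.

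For the converse I would use that $(\mmm,\delta_\Box)$ is a compact metric space. This is where Szemer\'edi's regularity lemma enters: its weak (Frieze--Kannan) form says that for every $\ep>0$ there is $m=m(\ep)$ such that every $g\in\mm$ lies within $\ep$ in $d_\Box$ of a function constant on each block $P_i\times P_j$ of the partition of $[0,1]^2$ induced by cutting $[0,1]$ into $m$ equal intervals. Quantizing the finitely many values of such block functions produces a finite $\ep$-net, so $\mmm$ is totally bounded; together with a routine completeness check this gives compactness.

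Now assume $t(H,G_n)\to t(H,f)$ for every $H$. By compactness, any subsequence of $(\widetilde G_n)$ has a further subsequence converging in $\delta_\Box$ to some $\wh\in\mmm$, and by the counting lemma $t(H,h)=\lim_k t(H,G_{n_k})=t(H,f)$ for all finite simple $H$. The crux is the \emph{uniqueness} statement: if $t(H,h)=t(H,f)$ for all finite simple $H$ then $\delta_\Box(\wh,\wf)=0$. Granting it, every subsequence of $(\widetilde G_n)$ has a sub-subsequence converging to $\wf$, hence $\widetilde G_n\to\wf$ and the theorem follows. To establish uniqueness I would note that polynomials in the $t(H,\cdot)$ determine, for each $k$, all mixed moments, hence (Weierstrass) the full law, of the random symmetric matrix $(h(U_i,U_j))_{1\le i,j\le k}$ with $U_1,U_2,\dots$ i.i.d.\ uniform on $[0,1]$; thus $f$ and $h$ induce the same distribution on $\{0,1\}$-valued exchangeable arrays through the $W$-random graphs $G(k,\cdot)$. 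A sampling lemma — again from the weak regularity lemma together with an Azuma bounded-differences estimate — shows $\widetilde{G(k,h)}\to\wh$ and $\widetilde{G(k,f)}\to\wf$ almost surely as $k\to\infty$; since the two sequences have the same law, $\wh=\wf$.

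I expect the third step, and within it the sampling-plus-uniqueness argument, to be the main obstacle: the counting lemma and compactness are soft once weak regularity is available, but recovering a graphon from its subgraph densities requires both the quantitative sampling estimate and a careful passage through exchangeable arrays (equivalently, the Aldous--Hoover representation). An alternative that sidesteps exchangeable arrays is to prove an ``inverse counting lemma'' directly — partition $G_n$ by Szemer\'edi's lemma, read off the intended block densities from the numbers $t(\cdot,f)$, and show the resulting step graphon is $\delta_\Box$-close to both $\widetilde G_n$ and $\wf$ — but the bookkeeping there is at least as delicate.
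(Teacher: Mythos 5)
The paper does not prove this statement; it is imported verbatim as Theorem~3.8 of \cite{borgsetal08} and used as a black box throughout. So there is no internal proof to compare your attempt against, and the relevant benchmark is the argument in \cite{borgsetal08} (together with the uniqueness result of Lov\'asz--Szegedy). Your overall architecture --- (1) a counting lemma giving Lipschitz continuity of $g\mapsto t(H,g)$ in $d_\Box$, hence the easy implication; (2) compactness of $(\mmm,\delta_\Box)$ via weak regularity; (3) subsequential limits plus a uniqueness theorem closed off by a sampling lemma --- is exactly the standard route and matches the cited source in spirit.

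One step as written is not quite right and would need repair. You claim that ``polynomials in the $t(H,\cdot)$ determine, for each $k$, all mixed moments, hence the full law, of the random symmetric matrix $(h(U_i,U_j))_{1\le i,j\le k}$.'' For \emph{simple} graphs $H$, $t(H,h)=\ee\bigl[\prod_{(i,j)\in E(H)}h(U_i,U_j)\bigr]$ only gives moments with exponents in $\{0,1\}$, and products of such $t$'s merely produce densities of disjoint unions, not higher powers of a fixed entry; so simple-graph densities do not determine the law of the $[0,1]$-valued matrix $(h(U_i,U_j))$ in the way you assert. What they \emph{do} determine, by inclusion--exclusion, is the law of the $\{0,1\}$-valued sampled graph $G(k,h)$: the events $\{E\subseteq E(G(k,h))\}$ have probability $t(([k],E),h)$. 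The fix is therefore to route entirely through the sampled random graph rather than the matrix --- which is in fact what your appeal to the sampling lemma ($\widetilde{G(k,h)}\to\widetilde h$ a.s.) does --- and drop the Weierstrass/moment-problem framing. With that correction the uniqueness argument, and the whole proof, goes through; the second route you mention (a direct inverse counting lemma via Szemer\'edi partitions) is also viable and is closer to how \cite{borgsetal08} organizes the proof, but is not needed here.
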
 
Szemer\'{e}di's regularity lemma and the related deep  results of Lov\'asz and Szegedy will play a crucial role in  this paper:
\subsection{Szemer\'edi's lemma}

Let $G= (V,E)$ be a simple graph, and let $X, Y$ be subsets of $V$. Then we denote by $e_G(X,Y)$ the number of $X$-$Y$ edges of $G$ (edges whose endpoints belong to $X\cap Y$ are counted twice), and call
\[
\rho_G(X,Y) := \frac{e_G(X,Y)}{|X||Y|}
\]
the {\it density} of the pair $(X,Y)$. Given some $\ep > 0$, we call a pair $(A,B)$ of disjoint sets $A, B \subseteq V$ $\ep$-regular if all $X\subseteq A$ and $Y\subseteq B$ with $|X|\ge \ep|A|$ and $|Y|\ge \ep |B|$ satisfy
\[
|\rho_G(X,Y) - \rho_G(A,B)| \le \ep. 
\]
A partition $\{V_0,\ldots,V_K\}$ of $V$ is called an $\ep$-regular partition of $G$ if it satisfies the following two conditions:
\begin{enumerate}
\item [(i)] $|V_0|\le \epsilon n$;
\item[(ii)] $|V_1|=|V_2|=\cdots= |V_K|$;
\item[(iii)] all but at most $\ep K^2$ of the pairs $(V_i, V_j)$ with $1\le i<j\le K$ are $\ep$-regular. 
\end{enumerate}
Szemer\'edi's regularity lemma goes as follows. 
\begin{thm}[Szemer\'edi's lemma \cite{szemeredi78}]
Given  $\ep >0$ and  an integer $m \ge 1$ there exists an integer $M = M(\ep,m)$ such that every graph  of order at least $M$   admits an $\ep$-regular partition $\{V_0,\ldots, V_K\}$ for some $K$ in the range  $m\le K\le M$. 
\end{thm}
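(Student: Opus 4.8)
The plan is to prove this by the classical \emph{energy increment} argument. For a partition $\mathcal{P} = \{W_1,\ldots,W_k\}$ of $V$ (temporarily ignoring the exceptional cell), define the index
\[
q(\mathcal{P}) := \sum_{i=1}^k \sum_{j=1}^k \frac{|W_i|\,|W_j|}{n^2}\,\rho_G(W_i,W_j)^2 .
\]
Since each $\rho_G\in[0,1]$ and the weights sum to $1$, we have $0\le q(\mathcal{P})\le 1$. The first step is to record that $q$ is monotone under refinement: if $\mathcal{P}'$ refines $\mathcal{P}$, then $q(\mathcal{P}')\ge q(\mathcal{P})$. This is just Cauchy--Schwarz (equivalently, convexity of $x\mapsto x^2$ and conditional Jensen applied within each block $(W_i,W_j)$): the weighted average of the squared sub-pair densities dominates the square of their weighted average, which is $\rho_G(W_i,W_j)^2$.

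The heart of the matter is the \emph{density increment lemma}: if a pair $(A,B)$ fails to be $\epsilon$-regular, witnessed by $X\subseteq A$, $Y\subseteq B$ with $|X|\ge\epsilon|A|$, $|Y|\ge\epsilon|B|$ and $|\rho_G(X,Y)-\rho_G(A,B)|>\epsilon$, then splitting $A$ into $\{X,A\setminus X\}$ and $B$ into $\{Y,B\setminus Y\}$ raises the local contribution by at least $\epsilon^4\,|A|\,|B|/n^2$. Again this is a Cauchy--Schwarz/variance computation: writing each sub-density as $\rho_G(A,B)$ plus a fluctuation, the fluctuation on the $X\times Y$ block has size $\ge\epsilon$, and that block carries weight $\ge\epsilon^2\,|A|\,|B|/n^2$, so the variance term is bounded below by $\epsilon^2\cdot\epsilon^2\,|A|\,|B|/n^2$.

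Combining the two, suppose $\mathcal{P}$ has $k$ non-exceptional cells and is not $\epsilon$-regular, i.e.\ more than $\epsilon k^2$ of the pairs are irregular. For each such pair pick witness sets, and refine \emph{every} cell simultaneously along all the Boolean atoms cut out by these witnesses; the resulting $\mathcal{P}'$ has at most $k\,4^{k}$ cells (each $W_i$ is sliced by at most $2k$ sets), and summing the per-pair gains over the $>\epsilon k^2$ irregular pairs yields $q(\mathcal{P}')\ge q(\mathcal{P})+c\,\epsilon^5$ for an absolute constant $c>0$. Since $q$ never exceeds $1$, this refinement step can be performed at most $O(\epsilon^{-5})$ times before we are forced onto an $\epsilon$-regular partition. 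Starting from an arbitrary equipartition into $m$ cells and iterating produces a final $K$ bounded by an $O(\epsilon^{-5})$-fold iterated exponential of $m$; this quantity is $M(\epsilon,m)$, and it also satisfies $K\ge m$ since we never merge cells.

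The remaining work is bookkeeping to meet (i)--(iii) verbatim. After each Boolean refinement the cells have unequal sizes, so one re-cuts each cell into blocks of a common size $\lfloor n/K'\rfloor$ for a suitable $K'$, discarding the at most $K'$ leftover vertices per cell into the new $V_0$ along with the previous exceptional cell; taking $m$ (hence every $K'$) large enough keeps $|V_0|\le\epsilon n$, and one checks that equalizing the cells perturbs $q$ by a negligible amount so the increment argument survives. \textbf{Main obstacle.} The scheme is conceptually clean, but the genuinely delicate point is the interplay between the exceptional set, the equitability requirement, and the energy bound: one must equalize cell sizes at each stage without sacrificing the $\Theta(\epsilon^5)$ energy gain and without letting $|V_0|$ creep above $\epsilon n$. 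It is precisely this tension that forces the tower-type dependence of $M$ on $\epsilon$ — a dependence later shown by Gowers to be essentially unavoidable.
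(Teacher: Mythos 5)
The paper does not prove Szemer\'edi's regularity lemma; it quotes the statement verbatim from Diestel \cite{diestel00} and cites Szemer\'edi's 1978 paper, treating it as a black-box ingredient. So there is no internal proof to compare against, but your sketch is exactly the standard energy-increment argument---the same one given in Diestel's text, which is the paper's source for this statement---so in effect you have reproduced the proof the paper implicitly relies on. The outline is correct: $q\in[0,1]$ and is monotone under refinement by conditional Jensen; an irregular pair witnessed by $X\subseteq A,\ Y\subseteq B$ yields an energy gain $\ge\ep^4|A||B|/n^2$ by the variance identity; summing over more than $\ep k^2$ irregular pairs among cells of size roughly $n/k$ gives a global increment of order $\ep^5$; and since $q\le1$ the iteration terminates after $O(\ep^{-5})$ steps, producing the tower-type bound on $M$.

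Two remarks. First, a small overcount: each cell $W_i$ is cut by at most $k-1$ witness sets (one per other cell with which it forms an irregular pair, not $2k$), so the Boolean refinement has at most $k\cdot 2^{k-1}$ atoms; your $k\cdot 4^k$ is a safe but loose bound and does not affect the conclusion. Second, the step you rightly flag as the ``main obstacle''---re-equitizing the Boolean refinement while dumping leftovers into $V_0$ without losing the $\Theta(\ep^5)$ gain or letting $|V_0|$ exceed $\ep n$---is a genuine part of the proof and cannot be waved away in a self-contained write-up. The standard device is to chop each Boolean atom into blocks of a fixed common size $\lfloor n/(k\,4^{k})\rfloor$ and to throw the remainders into the exceptional set; one then checks that the resulting loss in $q$ is $O(4^{-k})$, negligible against $\ep^5$ once $m$ (hence every $k$ appearing in the iteration) is taken sufficiently large depending on $\ep$, and that the cumulative size of $V_0$ over $O(\ep^{-5})$ rounds stays below $\ep n$. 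For the purposes of this paper, which only invokes the lemma, your sketch is adequate; for a complete proof you would need to carry out that equalization estimate explicitly.
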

\noindent This result was proved by Szemer\'edi \cite{szemeredi78} in 1976 and has since found numerous applications in combinatorics, number theory and many other areas of discrete mathematics. The version presented above is from Diestel~\cite{diestel00}, Section 7.2.  Lov\'asz and Szegedy proved the following related result.
\begin{thm}[Theorem 5.1 in \cite{lovaszszegedy07}]\label{compact}
The metric space $(\mmm, \delta_\Box)$ is compact.
\end{thm}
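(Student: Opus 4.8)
The plan is to deduce compactness of $(\mmm, \delta_\square)$ from the two defining properties of a compact metric space, namely total boundedness and completeness. Total boundedness is where Szemer\'edi's lemma enters, in the guise of a regularity lemma for graphons: for every $\ve>0$ there is an integer $m=m(\ve)$ such that every $W\in\mm$ admits a partition $\mathcal{P}=\{P_1,\dots,P_r\}$ of $[0,1]$ into $r\le m$ measurable sets for which the block-averaged function $W_{\mathcal P}$ (equal on $P_i\times P_j$ to the mean of $W$ there) satisfies $d_\square(W,W_{\mathcal P})\le\ve$; moreover $\mathcal{P}$ may be taken to refine any prescribed partition. This can be extracted from the finite version stated above by discretizing $W$ into a large weighted graph, or --- more cleanly --- proved directly by the energy-increment argument of Frieze and Kannan: so long as $d_\square(W,W_{\mathcal P})>\ve$, the sets $S,T$ witnessing this can be used to refine $\mathcal{P}$ in a way that increases $\int_{[0,1]^2}W_{\mathcal P}^2$ by at least $\ve^2$; since this quantity never exceeds $1$, the process stops after at most $\ve^{-2}$ steps, with a partition whose size is bounded in terms of $\ve$ alone.

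Granting this, total boundedness of $\mmm$ follows quickly. A step graphon with at most $m$ blocks is, up to a measure-preserving relabeling of $[0,1]$, described by its vector of block measures (a point of the standard simplex in $\rr^m$) together with the symmetric $m\times m$ matrix of its values in $[0,1]$; the map sending these data to the corresponding point of $\mmm$ is Lipschitz for $\delta_\square$, because the cut distance between two step graphons on a common partition is bounded by the $\ell^1$-norm of the difference of their value matrices, weighted by the block measures. Hence the set of equivalence classes of $m$-step graphons is a continuous image of a compact set, so is totally bounded, and by the regularity lemma every element of $\mmm$ lies within $\ve$ of it. Since $\ve$ was arbitrary, $\mmm$ is totally bounded; in particular every sequence in $\mm$ has a $\delta_\square$-Cauchy subsequence.

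It remains to prove completeness, i.e.\ that a $\delta_\square$-Cauchy sequence $\{\widetilde W_n\}$ has a limit in $\mmm$. Pass to a subsequence with $\sum_n\delta_\square(\widetilde W_n,\widetilde W_{n+1})<\infty$, and for each $n$ fix a representative $W_n$ and, using the refinement form of the regularity lemma, a nested family of partitions $\mathcal{P}_n^{(1)}\preceq\mathcal{P}_n^{(2)}\preceq\cdots$ with $d_\square(W_n,(W_n)_{\mathcal{P}_n^{(k)}})\le 1/k$. After applying a single measure-preserving bijection to $W_n$ we may assume all the $\mathcal{P}_n^{(k)}$ are partitions of $[0,1]$ into consecutive intervals, nested across $k$; then each step graphon $(W_n)_{\mathcal{P}_n^{(k)}}$ is encoded by finitely many reals --- the interval endpoints and the values --- all lying in a fixed compact set depending only on $k$. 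A diagonal extraction over $k$ yields a further subsequence along which, for every $k$, these encodings converge; their limits define step graphons $U_k$, and the nested, compatible structure forces $(U_k)_{k\ge1}$ to be a martingale, bounded since its values lie in $[0,1]$, with respect to the increasing $\sigma$-fields on $[0,1]^2$ generated by the limiting partitions. By the martingale convergence theorem $U_k\to U$ almost everywhere and in $L^1([0,1]^2)$ for some $U\in\mm$, so in particular $d_\square(U_k,U)\to 0$. Now the triangle inequality
\[
\delta_\square(\widetilde W_n,\widetilde U)\le \delta_\square\bigl(\widetilde W_n,\tau((W_n)_{\mathcal{P}_n^{(k)}})\bigr)+\delta_\square\bigl(\tau((W_n)_{\mathcal{P}_n^{(k)}}),\widetilde U_k\bigr)+\delta_\square(\widetilde U_k,\widetilde U)
\]
controls the three sources of error --- the regularity bound $1/k$, the convergence of the finite encodings for fixed $k$ (which, via the Lipschitz estimate of the previous paragraph, forces the middle term to $0$ along the subsequence), and martingale convergence --- so choosing $k$ large and then $n$ large gives $\delta_\square(\widetilde W_n,\widetilde U)\to0$ along the subsequence, and hence for the whole Cauchy sequence.

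I expect the principal difficulty to lie in the completeness argument rather than in the regularity input: one must choose the measure-preserving relabelings coherently so that the entire nested family of partitions becomes nested interval partitions simultaneously, confirm that convergence of the finitely many encoding parameters really does imply $\delta_\square$-convergence of the associated step graphons (this is exactly where the Lipschitz dependence of a step graphon on its block measures and value matrix is used), and handle the measure-theoretic fine print so that the martingale limit $U$ is a genuine element of $\mm$ --- symmetric, measurable, and $[0,1]$-valued. By contrast the regularity lemma is either quoted from the finite statement above or obtained by the short self-contained energy-increment argument, so it should pose no serious obstacle.
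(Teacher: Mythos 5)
The paper does not prove this theorem; it quotes it as Theorem~5.1 of \cite{lovaszszegedy07} and uses it as a black box. Your argument is, in substance, the Lov\'asz--Szegedy proof: weak regularity via the Frieze--Kannan energy increment, a single relabeling that turns a nested family of regularity partitions into nested interval partitions, a diagonal extraction of the finitely many parameters (interval endpoints and block values) encoding each step graphon, and the bounded martingale convergence theorem to produce a limit $U\in\mm$, followed by the three-term triangle inequality. One structural remark: the split into total boundedness plus completeness is redundant. Your ``completeness'' half never actually uses the Cauchy hypothesis in an essential way --- the summability $\sum_n\delta_\square(\widetilde W_n,\widetilde W_{n+1})<\infty$ you extract at the start plays no role downstream --- so that half of the argument already establishes sequential compactness of $(\mmm,\delta_\square)$ for an arbitrary sequence, and the first paragraph (total boundedness) could be dropped entirely. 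The technical points you flag at the end --- choosing one measure-preserving map that simultaneously straightens the whole nested family into interval partitions, confirming that convergence of the finite encodings yields $\delta_\square$-convergence of the step graphons (this needs both endpoint and value convergence, not just the fixed-partition Lipschitz estimate, though the $L^1$ argument is routine), and the mod-$0$ versus genuine-bijection issue in the definition of $\delta_\square$ --- are exactly where the real work lies in \cite{lovaszszegedy07}, and you are right to single them out; none of them is an obstruction.
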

\section{The main result}
\subsection{The rate function}
The main goal of this paper is to prove a large deviation principle for $G(n,p)$ when $p$ is fixed and $n\ra\infty$. The discussion in Section \ref{intro} gives a topological space (namely, $\mmm$) that is suitable for this purpose. The next step is to define a rate function on this space. Let $I_p:[0,1] \ra \rr$ be the function 
\begin{align}\label{ipdef1}
I_p(u) :&= \frac{1}{2}u\log \frac{u}{p}+\frac{1}{2}(1-u)\log\frac{1-u} {1-p}\\
&=\frac{1}{2}\sup_{a,b\in \rr}\Big[ au+b(1-u)-\log\big(pe^a+(1-p)e^b\big)\Big]\nonumber\\
&=\frac{1}{2}\sup_{a\in \rr}\Big[ au-\log\big(pe^a+(1-p)\big)\nonumber
\Big]
\end{align}
The domain of the function $I_p$ can be extended to $\mm$ as 
\begin{align}
I_p(h):&=\int_0^1\int_0^1I_p(h(x,y))\,dx\,dy\nonumber\\
&=\frac{1}{2}\int_0^1\int_0^1 \biggl[h(x,y)\log \frac{h(x,y)}{p}+(1-h(x,y))\log\frac{1-h(x,y)} {1-p}\biggr]dxdy\label{ipdef2}\\
&=\frac{1}{2}\sup_{a(\cdot,\cdot)}\biggl[\int a(x,y)h(x,y)\,dx\,dy\label{ipdef3}\\
&\qquad\qquad \qquad\qquad -\int \log(pe^{a(x,y)}+(1-p))\,dx\,dy\biggr].\nonumber
\end{align}
The following property of $I_p$ is crucial. 
\begin{lmm}\label{lower}
The function $I_p$ is well defined on $\mmm$ and  is lower semicontinuous under the  cut metric $\delta_\Box$ on $\mmm$. 
\end{lmm}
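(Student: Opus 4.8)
The plan is to exploit the variational formula \eqref{ipdef3}, which expresses $I_p(h)$ as the supremum over measurable test functions $a=a(x,y)$ of the functional
\[
J_a(h):=\tfrac12\int_{[0,1]^2}a(x,y)h(x,y)\,dx\,dy\;-\;\tfrac12\int_{[0,1]^2}\log\bigl(pe^{a(x,y)}+(1-p)\bigr)\,dx\,dy,
\]
and to restrict this supremum to the family $\ma$ of test functions for which $J_a$ is \emph{continuous} in the cut metric. The natural choice is to let $\ma$ be the set of finite linear combinations $a=\sum_k c_k\,\mathbf 1_{S_k\times T_k}$ of indicators of rectangles $S_k\times T_k\subseteq[0,1]^2$: for such an $a$ the second term of $J_a$ does not involve $h$ at all, while the first is linear in $h$ with $\bigl|\tfrac12\int a(h-h')\bigr|\le\tfrac12\bigl(\sum_k|c_k|\bigr)d_\Box(h,h')$ directly from \eqref{defcut}, so $J_a$ is $d_\Box$-Lipschitz on $\mm$. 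Once I show that the restricted supremum still equals $I_p$ on all of $\mm$, it follows that $I_p$ is a supremum of $d_\Box$-continuous functions, hence $d_\Box$-lower semicontinuous on $\mm$; transferring this to $\mmm$ is then bookkeeping.

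First I would record two elementary facts at the level of $\mm$. Since $0<p<1$, the scalar function $u\mapsto I_p(u)$ of \eqref{ipdef1} extends continuously to $[0,1]$ (using $u\log u\to0$ as $u\to0^+$) and is therefore bounded, so $I_p(h)=\int_{[0,1]^2}I_p(h(x,y))\,dx\,dy$ is a well-defined finite number for every $h\in\mm$. And because each $\sigma\in\Sigma$ is measure preserving, the substitution $(x,y)\mapsto(\sigma x,\sigma y)$ gives $I_p(h_\sigma)=I_p(h)$, so $I_p$ is constant on every orbit $\{h_\sigma:\sigma\in\Sigma\}$.

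The heart of the matter --- and the step I expect to be the main obstacle --- is the identity $I_p(h)=\sup_{a\in\ma}J_a(h)$ for every $h\in\mm$. The inequality $\sup_{a\in\ma}J_a(h)\le I_p(h)$ is painless: pointwise in $(x,y)$ one has $a(x,y)h(x,y)-\log(pe^{a(x,y)}+(1-p))\le 2I_p(h(x,y))$ by the Legendre duality in the third line of \eqref{ipdef1}, and one integrates. For the reverse inequality I would first allow arbitrary bounded measurable test functions: fixing $M$ and letting $a^*_M(x,y)$ be the maximizer over $t\in[-M,M]$ of the concave function $t\mapsto t\,h(x,y)-\log(pe^t+(1-p))$ (i.e.\ the truncation to $[-M,M]$ of $\log\tfrac{h(x,y)(1-p)}{p(1-h(x,y))}$), one gets $2J_{a^*_M}(h)=\int_{[0,1]^2}\phi_M(h(x,y))\,dx\,dy$, where $\phi_M(u):=\sup_{|t|\le M}\bigl[tu-\log(pe^t+(1-p))\bigr]$ increases to $2I_p(u)$ as $M\to\infty$, so monotone convergence gives $J_{a^*_M}(h)\to I_p(h)$. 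Then I would approximate each bounded measurable $a^*_M$ in $L^1([0,1]^2)$ by functions $a_n\in\ma$ (finite unions of rectangles approximate measurable subsets of $[0,1]^2$ in measure), and, since $|h|\le1$ and $t\mapsto\log(pe^t+(1-p))$ is $1$-Lipschitz, conclude $J_{a_n}(h)\to J_{a^*_M}(h)$; letting $n\to\infty$ and then $M\to\infty$ yields $\sup_{a\in\ma}J_a(h)\ge I_p(h)$. The two delicate points are the unboundedness of the optimal test function near $\{h=0\}\cup\{h=1\}$, dealt with by truncation together with monotone convergence, and the passage from bounded measurable to rectangle-step test functions.

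It remains to transfer this to $\mmm$. Because $(\mmm,\delta_\Box)$ is a metric space, its elements are exactly the $d_\Box$-closures of the $\Sigma$-orbits and these closures partition $\mm$; in particular $h\in\tilde g$ forces $\tilde h=\tilde g$, whence $g\in\tilde h$ as well. If $h\in\tilde g$, pick $g_{\sigma_n}\to h$ in $d_\Box$; then $d_\Box$-lower semicontinuity on $\mm$ and orbit-invariance give $I_p(h)\le\liminf_n I_p(g_{\sigma_n})=I_p(g)$, and the symmetric statement gives the reverse, so $I_p$ is constant on each class and descends to a well-defined function on $\mmm$. For lower semicontinuity on $(\mmm,\delta_\Box)$: given $\delta_\Box(\tilde h_n,\tilde h)\to0$, choose $\sigma_n$ with $d_\Box((h_n)_{\sigma_n},h)\le\delta_\Box(\tilde h_n,\tilde h)+\tfrac1n\to0$ and conclude $I_p(\tilde h)=I_p(h)\le\liminf_n I_p((h_n)_{\sigma_n})=\liminf_n I_p(h_n)=\liminf_n I_p(\tilde h_n)$, again using $d_\Box$-lower semicontinuity on $\mm$ and orbit-invariance.
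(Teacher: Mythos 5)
Your proof is correct and rests on the same structural idea as the paper's: express $I_p$ on $\mm$ as a supremum of affine functionals that are continuous in the relevant topology, conclude lower semicontinuity, use $\Sigma$-invariance plus this lower semicontinuity to see that $I_p$ is constant on each class $\widetilde h$, and then transfer both facts to $(\mmm,\delta_\Box)$. The one place you diverge is in how you establish continuity of the individual functionals $J_a$. The paper takes the supremum in \eqref{ipdef3} over \emph{all} bounded measurable $a$, observes that each $J_a$ is continuous for the weak topology on $\mm$, and then invokes the fact that the $d_\Box$ topology refines the weak topology; the refinement claim itself ultimately comes down to the approximation, in $L^1$, of bounded test functions by rectangle step functions. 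You instead restrict the supremum to the class $\ma$ of rectangle step functions, where $d_\Box$-Lipschitz continuity of $J_a$ is an immediate consequence of the definition \eqref{defcut}, and then do the extra work of checking (via truncation, monotone convergence, and density of $\ma$ in $L^1$) that the restricted supremum still recovers $I_p$. So you are essentially unfolding, explicitly, the fact the paper uses implicitly. The paper's route is shorter once one grants the topology comparison; yours is more self-contained, since it never mentions the weak topology and makes the cut-metric continuity manifest. Your treatment of the well-definedness on $\mmm$ and the transfer of lower semicontinuity via a diagonal choice of $\sigma_n$ matches the paper's intent, spelled out in a bit more detail.

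One small caution: in identifying $2J_{a^*_M}(h)=\int\phi_M(h)$ and appealing to monotone convergence you should note, as you implicitly do, that $\phi_M\ge 0$ (take $t=0$) and $\phi_M\uparrow 2I_p$ on $[0,1]$; this is correct, and handles the unboundedness of the optimal test function near $\{h=0\}\cup\{h=1\}$ exactly as needed. Also, when you pass from bounded $a^*_M$ to $a_n\in\ma$, the $1$-Lipschitz bound on $t\mapsto\log(pe^t+1-p)$ is the key estimate and is used correctly.
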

\begin{proof}
The supremum in \eqref{ipdef3} can be taken over all bounded  measurable functions $a$ on $[0,1]^2$. As the supremum of a family of  affine linear functionals continuous in the weak topology, $I_p(h)$ is lower semi-continuous in the weak topology and  therefore also in the  topology of the metric $d_\square$ . If $\sigma:[0,1]\ra [0,1]$ is a measure preserving bijection then $I_p(h_\sigma)=I(h)$. By lower semi-continuity of $g\in {\widetilde h}$, 
$I_p(g)\le I_p(h)$. But $g\in {\widetilde h}$ implies $h\in {\widetilde g}$ so that $I_p(h)\le I_p(g)$. Hence, $I_p(g)=I_p(h)$
and $I_p(\cdot)$ is well defined and lower semi-continuous on $\mmm$. 
 \end{proof}
\subsection{The Large Deviation Principle}  
The random graph $G(n,p)$ induces  probability distributions $\pp_{n,p}$ on the space $\mm$ through the
map $G\ra f^G$ and ${\widetilde\pp}_{n,p}$ on $\mmm$ through the  map $G\ra f^G\ra {\widetilde f}^G= \tau f^G$. The space $\mm$
is compact in the weak topology and a large deviation principle for $\pp_{n,p}$ on $\mm$ in the weak topology
with the lower-semicontinuous rate function
$I_p(h)$ given by \eqref{ipdef2}
is elementary but is not of much use since  quantities like `triangle counts' are not stable in the weak topology. We will state it for the record and find a use for it later.
\begin{thm}\label{weak}
The sequence $\pp_{n,p}$ on $\mm$ satisfies a large deviation principle in the weak topology. That is,
for every weakly closed set $F\subset\mm$
$$
\limsup_{n\ra \infty}\frac{1}{n^2}\log \pp_{n,p}(F)\le -\inf_{f\in F} I_p(f)
$$
and for any open set $U$ (again in the weak topology)  in $\mm$
$$
\liminf_{n\ra \infty}\frac{1}{n^2}\log \pp_{n,p}(U)\ge -\inf_{f\in U} I_p(f).
$$
\end{thm}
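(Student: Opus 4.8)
The plan is to prove the two halves of the principle directly, using only the representation of $f^G$ in terms of an i.i.d.\ array together with elementary exponential estimates; the weak compactness of $\mm$ (already noted) reduces closed sets to compact sets and makes the argument routine. Assume $p\in(0,1)$, the endpoints being degenerate. By \eqref{wg}, the graphon $f^G$ of $G=G(n,p)$ is the step function that takes the value $\xi_{ij}$ on the cell $J_i\times J_j$, $J_i=((i-1)/n,i/n]$, where the $\xi_{ij}=\xi_{ji}$ ($1\le i<j\le n$) are i.i.d.\ $\mathrm{Bernoulli}(p)$ and $\xi_{ii}=0$. For bounded measurable symmetric $a$ on $[0,1]^2$ put $\ell_a(f)=\int_0^1\int_0^1 a f$; since $a\in L^1$, these functionals are continuous in the weak topology, and by \eqref{ipdef3} (substituting $a\mapsto a/2$) one has $I_p(f_0)=\sup_a\bigl[\ell_a(f_0)-\Lambda(a)\bigr]$ with $\Lambda(a):=\tfrac12\int_0^1\int_0^1\log(pe^{2a}+1-p)$.

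For the upper bound I would use the exponential Chebyshev inequality. Independence of the $\xi_{ij}$ gives $\ee_{n,p}[e^{n^2\ell_a(f^G)}]=\prod_{i<j}(pe^{2a_{ij}}+1-p)$, where $a_{ij}$ is the average of $a$ over $J_i\times J_j$, and a Lebesgue-differentiation argument (the cell averages of $a$ converge to $a$ in $L^1$, and $x\mapsto\log(pe^{2x}+1-p)$ is Lipschitz) shows $n^{-2}\log\ee_{n,p}[e^{n^2\ell_a(f^G)}]\to\Lambda(a)$. Given a weakly closed — hence weakly compact — set $F$ and $\ve>0$, for each $f_0\in F$ choose $a$ with $\ell_a(f_0)-\Lambda(a)\ge I_p(f_0)-\ve$ and a weak neighbourhood $N_{f_0}$ of $f_0$ on which $\ell_a\ge\ell_a(f_0)-\ve$; then $\pp_{n,p}(N_{f_0})\le e^{-n^2(\ell_a(f_0)-\ve)}\ee_{n,p}[e^{n^2\ell_a(f^G)}]$, so $\limsup_n n^{-2}\log\pp_{n,p}(N_{f_0})\le -I_p(f_0)+2\ve\le -\inf_F I_p+2\ve$. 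Cover $F$ by finitely many such $N_{f_0}$, apply the union bound, and let $\ve\downarrow 0$.

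For the lower bound, let $U$ be weakly open and fix $f_0\in U$ with $I_p(f_0)<\infty$ (otherwise the desired bound is vacuous). Let $g$ be the average of $f_0$ over the cells of the $k\times k$ grid. As $k\to\infty$, $g\to f_0$ weakly (these are conditional expectations), so $g\in U$ for $k$ large, while $I_p(g)\le I_p(f_0)$ by Jensen applied to the convex function $I_p$. Since the sets $\{f:|\overline f_{ab}-q_{ab}|<\eta,\ 1\le a,b\le m\}$ — with $\overline f_{ab}$ the block average of $f$ over an $m\times m$ grid and $q_{ab}$ the (constant) block values of $g$ — form a neighbourhood base of $g$ as $m\to\infty$, $\eta\to 0$ (step functions being dense in $L^1$), one of them, call it $V$, lies in $U$; take $m$ divisible by $k$. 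For $n$ divisible by $m$, $\overline{f^G}_{ab}$ is the average of $(n/m)^2$ independent $\mathrm{Bernoulli}(p)$'s when $a\ne b$ and of $\binom{n/m}{2}$ of them when $a=b$, and these are independent across distinct blocks; the standard binomial (Stirling) lower bound gives $\pp(|\overline{f^G}_{ab}-q_{ab}|<\eta)\ge\exp\bigl(-(n/m)^2(2I_p(q_{ab})+o(1))\bigr)$ for $a\ne b$ and the analogue with $\binom{n/m}{2}$ for $a=b$, where $2I_p(q)=q\log\tfrac qp+(1-q)\log\tfrac{1-q}{1-p}$ by \eqref{ipdef1}. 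Multiplying over all blocks and using $\tfrac1{m^2}\sum_{a,b}I_p(q_{ab})=I_p(g)$ yields
\[
\liminf_n n^{-2}\log\pp_{n,p}(U)\ \ge\ \liminf_n n^{-2}\log\pp_{n,p}(V)\ \ge\ -I_p(g)\ \ge\ -I_p(f_0),
\]
and taking the supremum over $f_0\in U$ gives the claim. (If $m\nmid n$, set aside the fewer than $m$ leftover vertices; this changes speed-$n^2$ quantities by $o(1)$.)

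The argument is elementary, as the text asserts. The one step that needs a little thought is the lower bound's reduction of an arbitrary weak neighbourhood to one described by block averages on a fine enough grid, so that the purely combinatorial estimate actually controls $\pp_{n,p}(U)$; the rest — the diagonal blocks, the rounding of $n$, and the binomial asymptotics — is bookkeeping. I expect no genuine obstacle; in particular, since $\mm$ is weakly compact and $I_p$ is weakly lower semicontinuous on $\mm$ (a supremum of weakly continuous affine functionals, as in the proof of Lemma~\ref{lower}), the rate function is automatically good.
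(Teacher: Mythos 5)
Your proposal is correct, and it takes a genuinely different route from the paper. The paper computes the limiting normalized cumulant generating functional $\Lambda(\phi)=\tfrac12\iint\log(pe^{2\phi}+1-p)$ --- the same quantity you call $\Lambda(a)$ --- and then invokes the abstract G\"artner--Ellis theorem: the upper bound follows from Theorem~4.5.3 of \cite{dembozeitouni98}, and the lower bound from the exposed-point criterion (Theorem~4.5.20 there), verified by exhibiting the unique optimizing $\phi_f$ and checking strict positivity of the associated relative entropy, together with weak compactness of $\mm$ for exponential tightness. You instead reprove both halves by hand. Your upper bound (exponential Chebyshev at a near-optimal $a$, then a finite cover of the weakly compact $F$) is substantively the same mechanism G\"artner--Ellis abstracts, so that half is a matter of exposition rather than method. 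Your lower bound is where the real divergence lies: rather than verify exposed points, you replace $f_0$ by its block average $g$ (a conditional expectation, whence $I_p(g)\le I_p(f_0)$ by Jensen), note that block-average functionals generate a weak neighbourhood base, and then bound $\pp_{n,p}$ of such a neighbourhood from below by a product of independent binomial large-deviation estimates, one per block. This is self-contained, avoids any convex-analytic apparatus, and is closer in spirit to the explicit tilting argument the paper later uses for the lower bound of Theorem~\ref{main}. The one step that deserves a sentence in a final write-up --- and you do flag it --- is the reduction of an arbitrary weak neighbourhood of $g$ to a block-average one: this works because any finite family of $L^1$ test functions can be simultaneously $L^1$-approximated by step functions on a common grid, and $\mm$ is uniformly bounded. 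Note also that $I_p$ is bounded on $[0,1]$, so $\inf_F I_p$ is always finite and no separate treatment of the infinite case is needed in the upper bound. Both arguments ultimately lean on the weak compactness of $\mm$: the paper for exponential tightness in the G\"artner--Ellis lower bound, you for the finite cover in the upper bound.
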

\begin{proof}
The weak topology is defined through an arbitrary but finite number of linear functionals. Therefore the large deviation principle
can be reduced to the large deviation behavior of a finite set of linear functionals $\{Z_\phi(f)\}$ given by
$$
Z_\phi(f)=\smallavg{\phi,f} := \iint f(x,y)\phi(x,y) dx dy
$$
under the measure $\pp^{n,p}$.  
The limit  
$$
\lim_{n\ra\infty}\frac{1}{n^2}\log \ee^{\pp_{n,p}}\biggl[\exp\biggl(n^2\iint f(x,y)\phi (x,y) dx dy\biggr)\biggr]
$$
is easily calculated to yield
$$
\frac{1}{2}\iint \log (pe^{2\phi (x,y)}+(1-p)) dxdy.
$$
(Note that this is true only if $\phi$ is symmetric. However, since $f$ is symmetric, it suffices to restrict attention to symmetric $\phi$.) Therefore, an abstract G\"artner-Ellis Theorem (see e.g.\ Theorem 4.5.3 in \cite{dembozeitouni98}) gives the upper bound with rate function
$$
I_p(f)=\sup_\phi\biggl[\smallavg{\phi, f}-\frac{1}{2}\iint \log (pe^{2\phi (x,y)}+(1-p)) dxdy\biggr].
$$
Note that this is the rate function $I_p$ defined in \eqref{ipdef2} and \eqref{ipdef3}. The supremum is attained at the function
\[
\phi_f(x,y) := \frac{1}{2}\log\frac{f(x,y)}{p} -  \frac{1}{2}\log\frac{1-f(x,y)}{1-p}. 
\]
Note that for any $g\ne f$, 
\begin{align*}
&\big(\smallavg{\phi_f, f} - I_p(f)\big)-\big(\smallavg{\phi_f, g} - I_p(g)\big) \\
&= \frac{1}{2}\iint \biggl(g(x,y)\log \frac{g(x,y)}{f(x,y)} + (1-g(x,y)) \log \frac{1-g(x,y)}{1-f(x,y)}\biggr) \, dx\, dy > 0. 
\end{align*}
This shows that every $f$ is an exposed point of the lower semicontinuous rate function $I_p$, in the parlance of convex analysis. Therefore by the G\"artner-Ellis Theorem (see e.g.\ Theorem 4.5.20 in \cite{dembozeitouni98}) and the compactness of the weak topology, we get the lower bound. 
\end{proof}
The large deviation principle for ${\widetilde\pp}_{n,p}$ on $(\mmm, \delta_\Box)$ is much more useful and is the main result of this article.
\begin{thm}\label{main}
For each fixed $p\in (0,1)$, the sequence ${\widetilde \pp}_{n,p}$ obeys a large deviation principle in the space $\mmm$ (equipped with the cut metric) with rate function $I_p$ defined by \eqref{ipdef2}. Explicitly, this means that
for any closed set $\widetilde {F} \subseteq\mmm$,
\begin{align}\label{closed}
\limsup_{n\ra\infty} \frac{1}{n^2}\log {\widetilde\pp}_{n,p}(\widetilde {F}) &\le -\inf_{{\widetilde h}\in \widetilde{F}} I_p({\widetilde h}).
\end{align}
and for any open set $\widetilde{U}\subseteq \mmm$,
\begin{align}\label{open}
 \liminf_{n\ra\infty} \frac{1}{n^2}\log {\widetilde\pp}_{n,p}( \widetilde{U}) &\ge -\inf_{{\widetilde h}\in \widetilde{U}} I_p({\widetilde h}). 
\end{align}
\end{thm}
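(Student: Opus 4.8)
The plan is to deduce Theorem~\ref{main} from three facts already in hand: $\mmm$ is compact (Theorem~\ref{compact}), $I_p$ is lower semicontinuous on $\mmm$ (Lemma~\ref{lower}), and Szemer\'edi's regularity lemma. Compactness makes exponential tightness automatic, so it is enough to prove the upper bound \eqref{closed} for an arbitrary closed $\widetilde F\subseteq\mmm$ and the lower bound \eqref{open} for an arbitrary open $\widetilde U\subseteq\mmm$. One cannot simply quote Theorem~\ref{weak}, because the cut topology on $\mmm$ is strictly finer than the weak topology; Szemer\'edi's lemma is what bridges this gap for the upper bound, while the lower bound will come from a tilting (change-of-measure) argument. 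I expect the upper bound to be the delicate part, since it requires replacing a condition that a priori ranges over a continuum of vertex relabelings by a statement about boundedly many parameters, without losing mass in the rate function.

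For the upper bound, fix $\ep>0$. By Szemer\'edi's lemma every graph $G$ on $n$ vertices (for $n$ large) has an $\ep$-regular equipartition into $K\le M(\ep)$ classes, and a standard consequence of regularity is $d_\square(f^G,\bar h^G)\le C\ep$, where $\bar h^G$ is the step function equal to the pair density $\rho_G(V_i,V_j)$ on $V_i\times V_j$ (the small exceptional class contributing negligibly). Rounding the step values to multiples of $\ep$ replaces $\bar h^G$ by some $\bar h'$ lying in a finite set $\ms_\ep$ \emph{not depending on $n$}, with $d_\square(\bar h^G,\bar h')\le\ep$; hence if $\tau f^G\in\widetilde F$ then $\tau\bar h'$ lies within cut distance $(C+1)\ep$ of $\widetilde F$, so $I_p(\bar h')\ge m_\ep:=\inf\{I_p(\widetilde g):\delta_\square(\widetilde g,\widetilde F)\le(C+1)\ep\}$. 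I would then bound
\[
{\widetilde\pp}_{n,p}(\widetilde F)\ \le\ \sum_{\bar h'\in\ms_\ep,\ I_p(\bar h')\ge m_\ep}\ \pp_{n,p}\bigl(\text{some equipartition of }[n]\text{ has all pair densities within }\ep\text{ of }\bar h'\bigr),
\]
and estimate each summand by a union bound over equipartitions. There are only $e^{o(n^2)}$ equipartitions of $[n]$ into $K\le M(\ep)$ classes, so the union bound over partitions costs nothing on the exponential scale, and for any \emph{fixed} partition the pair densities are sums of independent $\mathrm{Bernoulli}(p)$ variables, so an elementary binomial large deviation bound gives $\pp_{n,p}(\cdots)\le\exp(-n^2(I_p(\bar h')-\eta(\ep)-o(1)))$ with $\eta(\ep)\to0$. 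As $\ms_\ep$ is finite this yields $\limsup_n n^{-2}\log{\widetilde\pp}_{n,p}(\widetilde F)\le -m_\ep+\eta(\ep)$, and letting $\ep\downarrow0$, lower semicontinuity of $I_p$ together with compactness of $\widetilde F$ forces $m_\ep\uparrow\inf_{\widetilde F}I_p$, giving \eqref{closed}. The point to be careful about is exactly the bookkeeping that makes regularization cost nothing: one compares $I_p(\bar h')$ to the infimum of $I_p$ over a \emph{shrinking cut-neighborhood} of $\widetilde F$, not to its value at a point, and then invokes lower semicontinuity to pass to the limit.

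For the lower bound, fix an open $\widetilde U$ and $\widetilde h\in\widetilde U$; I would show $\liminf_n n^{-2}\log{\widetilde\pp}_{n,p}(\widetilde U)\ge -I_p(\widetilde h)$. First reduce to the case $\widetilde h=\tau h'$ with $h'$ a step function all of whose values lie in $(0,1)$: the conditional expectation of a representative $h$ onto a fine dyadic block partition is $L^1$-close (hence $d_\square$-close) to $h$ and, by Jensen, has no larger $I_p$, while truncating the step values into $[\eta,1-\eta]$ only decreases $I_p$ further. Thus it suffices to prove ${\widetilde\pp}_{n,p}(\widetilde B)\ge e^{-n^2(I_p(h')+o(1))}$ for a small cut-ball $\widetilde B$ around such a $\tau h'$. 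For this, let $\pp^{(n)}$ be the law of the inhomogeneous random graph on $[n]$ in which vertex $v$ is placed in block $\lceil vk/n\rceil$ and an edge between blocks $i,j$ appears independently with probability $h'_{ij}$. Each pair of blocks is an independent bipartite Erd\H{o}s--R\'enyi graph, so by Chernoff bounds and standard quasirandomness, under $\pp^{(n)}$ the graphon $f^G$ is cut-close to $h'$ with probability $\to1$, i.e.\ $\pp^{(n)}(\tau f^G\in\widetilde B)\to1$. Since all edge probabilities lie in $(0,1)$, $\pp^{(n)}$ and $\pp_{n,p}$ are mutually absolutely continuous, $\log(d\pp_{n,p}/d\pp^{(n)})$ is a sum of independent bounded variables with $\pp^{(n)}$-mean $-n^2I_p(h')(1+o(1))$ and variance $O(n^2)$, and Jensen's inequality applied to $\pp^{(n)}(\,\cdot\mid \tau f^G\in\widetilde B)$ (the variance bound absorbing the conditioning correction) gives
\[
{\widetilde\pp}_{n,p}(\widetilde B)=\ee^{\pp^{(n)}}\bigl[\mathbf 1_{\{\tau f^G\in\widetilde B\}}\,\tfrac{d\pp_{n,p}}{d\pp^{(n)}}\bigr]\ \ge\ \pp^{(n)}(\tau f^G\in\widetilde B)\,e^{-n^2(I_p(h')+o(1))},
\]
which is what is needed.

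Summarizing the difficulty: the hard step is the upper bound, and within it the crucial insight is that Szemer\'edi's lemma collapses the continuum of relevant near-$\widetilde F$ configurations to a single finite, $n$-independent family of quantized block profiles, while the number of equipartitions of $[n]$ is subexponential in $n^2$, so a union bound over them is harmless. The remaining subtleties are routine: on the upper-bound side, arranging the $\ep$-bookkeeping so that regularization loses no rate (handled via neighborhoods and lower semicontinuity), and on the lower-bound side, the cut-metric concentration of inhomogeneous random graphs and the boundedness of the log-likelihood ratio (handled by reducing to step graphons with values strictly in $(0,1)$).
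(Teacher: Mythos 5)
Your proposal is correct and the overall architecture mirrors the paper's: compactness (Theorem~\ref{compact}) plus lower semicontinuity (Lemma~\ref{lower}) reduce the matter to ball estimates; Szemer\'edi's lemma collapses the orbit structure to a finite family of block profiles for the upper bound; tilting to an inhomogeneous random graph plus Jensen gives the lower bound. The genuine difference is in how you finish the upper bound. The paper (Lemmas~\ref{nl1}--\ref{nl3}) pays a factor of $n!$ to fix a vertex labeling, covers the compact set $\mathcal V_{m,M}$ of $K\times K$ step graphons by finitely many $L^1$-balls, and then, crucially, observes that a cut-ball $B(g,2\ep)\subset\mm$ is \emph{weakly closed}, so the already-proved weak-topology LDP (Theorem~\ref{weak}) applies directly; the $\ep$-error is then absorbed by lower semicontinuity. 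You instead quantize the block densities to an $n$-independent finite family $\ms_\ep$, take a union bound over the $e^{O(n\log K)}=e^{o(n^2)}$ equipartitions of $[n]$, and then, for a fixed equipartition, compute the probability of prescribed pair densities by an elementary Chernoff/Sanov estimate using independence of edges between distinct pairs of blocks (the factor $\tfrac12$ in $I_p$ coming from $\binom n2\sim n^2/2$ edges). Both bridges between cut-metric balls and tractable probability estimates are sound; the paper's route recycles the G\"artner--Ellis machinery of Theorem~\ref{weak} and is a bit shorter, while yours is more self-contained and makes the combinatorial/probabilistic content explicit without invoking the weak LDP as a black box. Your lower bound is essentially the paper's argument, with two cosmetic variants: you use a fixed coarse block approximation of $h$ rather than the paper's $n\times n$ block average $h_n$, and you truncate the step values into $[\eta,1-\eta]$ to keep the log-likelihood ratio bounded. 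Neither change is strictly necessary (the paper's $\pp_{n,h}\ll\pp_{n,p}$ and the a.s.\ $O(n^2)$ bound on $\log\tfrac{d\pp_{n,h}}{d\pp_{n,p}}$ already suffice), but they do make the Jensen step and the removal of the conditioning cleaner.
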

For the upper bound, because $(\mmm, \delta_\square)$ is compact,
it is sufficient to prove that for any ${\widetilde h}\in\mmm$,
$$
\lim_{\eta\ra 0}\limsup_{n\ra\infty} \frac{1}{n^2}\log{\widetilde\pp}_{n,p}(S_\square({\widetilde h},\eta)) \le - I_p({\widetilde h}).
$$
For the lower bound we need to prove that if ${\widetilde h}\in\mmm$ and $\eta>0$ is arbitrary
$$
\liminf_{n\ra\infty} \frac{1}{n^2}\log {\widetilde\pp}_{n,p}(S_\square({\widetilde h},\eta)) \ge - I_p({\widetilde h}), 
$$
where $S_\square({\widetilde h},\eta)=\{{\widetilde g}: \delta_\square({\widetilde g},{\widetilde h})\le \eta\}$.

\subsection{Proof of the upper bound in Theorem \ref{main}}

Let $B({\widetilde h}, \eta)\subset\mm$ be defined as
$$B({\widetilde h}, \eta)=\tau^{-1}S_\square({\widetilde h},\eta) \subset \mm$$
i.e. the union of all the orbits from $ S_\square({\widetilde h},\eta)\subset\mmm$.  We need   to show that
\begin{equation}\label{upperbound}
\lim_{\eta\ra 0} \limsup_{n\ra\infty} \frac{1}{n^2}\log\pp_{n,p}[B({\widetilde h},\eta)] \le - I_p({\widetilde h})
\end{equation}
Let the set of $n$ vertices be partitioned into $K$ subsets of size $a$ with a remainder of size $b$, so that $n=Ka+b$. We assume that $b\le \ep n$.  We order the vertices so that $V_0=\{1,2,\ldots b\}$ and   $V_i=\{ b+(i-1)a+1,\ldots, b+ia\}$ for $i=1,2,\ldots, K$. We map the vertices into subintervals of the  unit interval, with  the vertex $r$ represented by the interval $[\frac{r-1}{n}, \frac{r}{n}]$. The sets $V_i$ of vertices will then correspond to the intervals $E_0=[0,\frac{b}{n}]$
for $i=0$ and $E_i=[\frac{b+(i-1)a}{n},\frac{b+ia}{n}]$ for $1\le i\le K$.  Let us denote by ${\mathcal V}_{K}$ the subset of $\mm$ consisting of $g(x,y)$ that are  of the form
$$
g(x,y)=\sum_{i,j=1}^K p_{i,j}{\bf 1}_{E_i}(x){\bf 1}_{E_j}(y)
$$
where  $\{p_{i,j}\}$, $1\le i,j\le K$  is symmetric and satisfies $0\le p_{i,j}\le 1$. For any pair  $m,M$ with $m<M$,  we define ${\mathcal V}_{m,M}=\cup_{m\le K\le M}{\mathcal V}_{K}$. The following is a restatement of  the Szemer\'{e}di regularity lemma. 

\begin{lmm}\label{nl1}
Given any $\epsilon>0$  and $m\ge 1$ such that $2m^{-1}< \ep$, there is $M=M(\epsilon,m)$ such that for any graph $G$, there exists a permutation $\pi$, i.e.\ a relabeling of the vertices of the graph, such that
$$
\inf_{g\in {\mathcal V}_{m,M} }d_\Box(f^{\pi G}, g)\le \epsilon.
$$
\end{lmm}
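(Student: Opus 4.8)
The plan is to deduce this directly from Szemer\'edi's lemma. Fix $\ep>0$ and $m$ with $2m^{-1}<\ep$, put $\ep':=\ep/10$, and let $M:=M(\ep',m)$ be the integer furnished by Szemer\'edi's lemma for the parameters $\ep'$ and $m$; we may assume $n=|V(G)|\ge M$, since only the regime $n\ra\infty$ is relevant below. Applying Szemer\'edi's lemma to $G$ produces an $\ep'$-regular partition $\{V_0,V_1,\dots,V_K\}$ with $m\le K\le M$, $|V_0|\le\ep' n$, and $|V_1|=\cdots=|V_K|=:a$. Let $\pi$ be a relabeling of the vertices of $G$ placing $V_0$ in the first $b:=|V_0|$ positions and $V_1,\dots,V_K$ in the successive blocks of length $a$; under the interval encoding described above, $V_i$ then corresponds to $E_i$. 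With $p_{i,j}:=\rho_G(V_i,V_j)$, set
\[
g(x,y):=\sum_{i,j=1}^K p_{i,j}\,\mathbf{1}_{E_i}(x)\mathbf{1}_{E_j}(y)\in\mathcal V_K\subseteq\mathcal V_{m,M},
\]
so that it suffices to prove $d_\Box(f^{\pi G},g)\le\ep$.

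Since $f^{\pi G}-g$ is constant on each grid cell $[\frac{r-1}{n},\frac rn]\times[\frac{s-1}{n},\frac sn]$, the quantity $\int_{S\times T}(f^{\pi G}-g)$ is bilinear in the grid-interval average vectors of $\mathbf{1}_S$ and $\mathbf{1}_T$, each lying in $[0,1]^n$; hence in evaluating $d_\Box(f^{\pi G},g)$ we may take $S$ and $T$ to be unions of grid intervals, i.e.\ vertex subsets $A,B$ of $\pi G$, which we identify with subsets of $V(G)$ via $\pi$. For such $S,T$,
\[
\int_{S\times T}(f^{\pi G}-g)=\frac1{n^2}\sum_{i,j=1}^K|A\cap V_i|\,|B\cap V_j|\bigl(\rho_G(A\cap V_i,B\cap V_j)-\rho_G(V_i,V_j)\bigr)+\frac1{n^2}R,
\]
where $R$ is the sum of the nonnegative quantities $e_G(A\cap V_i,B\cap V_j)$ over all pairs $(i,j)$ with $i=0$ or $j=0$; since $R\le 2n|V_0|$, this remainder contributes at most $2|V_0|/n\le 2\ep'$ in absolute value.

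It remains to bound the displayed double sum, which I would break into three pieces. The diagonal terms $i=j$ contribute at most $\sum_{i=1}^K|V_i|^2/n^2=Ka^2/n^2\le a/n\le 1/K\le 1/m$. For an off-diagonal pair $(V_i,V_j)$ that is $\ep'$-regular the key estimate is $\bigl|e_G(X,Y)-\rho_G(V_i,V_j)\,|X|\,|Y|\bigr|\le\ep'|V_i|\,|V_j|$ for all $X\subseteq V_i$, $Y\subseteq V_j$: this is the definition of $\ep'$-regularity (multiplied by $|X|\,|Y|$) when $|X|\ge\ep'|V_i|$ and $|Y|\ge\ep'|V_j|$, while if, say, $|X|<\ep'|V_i|$ then both $e_G(X,Y)$ and $\rho_G(V_i,V_j)\,|X|\,|Y|$ lie in $[0,|X|\,|Y|]\subseteq[0,\ep'|V_i|\,|V_j|]$, so the same bound holds. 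Applying this with $X=A\cap V_i$, $Y=B\cap V_j$ and summing over the at most $K^2$ regular pairs gives a contribution of at most $\ep'K^2a^2/n^2\le\ep'$. Finally, by the regularity lemma at most $\ep'K^2$ of the unordered pairs $\{i,j\}$ are irregular, hence at most $2\ep'K^2$ ordered ones, and each such term is at most $|V_i|\,|V_j|/n^2\le a^2/n^2$ in absolute value, for a total of at most $2\ep'$. Adding the four estimates,
\[
d_\Box(f^{\pi G},g)\le 2\ep'+1/m+\ep'+2\ep'=5\ep'+1/m<\ep/2+\ep/2=\ep,
\]
using $\ep'=\ep/10$ and $2m^{-1}<\ep$, and since $g\in\mathcal V_{m,M}$ the lemma follows. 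The only genuinely delicate point is the regular-pair estimate with its small-set case, together with the reduction to vertex subsets; everything else is bookkeeping.
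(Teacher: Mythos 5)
Your argument is correct and follows essentially the same route as the paper: apply Szemer\'edi's lemma to get an $\ep'$-regular partition, take $g$ to be the associated block-constant density function, reduce the cut-norm supremum to vertex subsets, and bound the resulting sum blockwise by separating the $V_0$ terms, the diagonal, the regular pairs, and the irregular pairs, arriving at a bound of the form $5\ep' + m^{-1} < \ep$. The only cosmetic differences are that you fix $\ep' = \ep/10$ from the start rather than choosing it at the end, you set $p_{i,i}=\rho_G(V_i,V_i)$ rather than $0$ on the diagonal (either works under the $1/K\le 1/m$ bound), and you fold the small-intersection case into a single clean inequality $|e_G(X,Y)-\rho_G(V_i,V_j)|X||Y||\le\ep'|V_i||V_j|$ valid for all $X\subseteq V_i$, $Y\subseteq V_j$, whereas the paper treats the small-set case as a separate branch.
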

\begin{proof}
Let $\ep'$ and $m$ be given. According to Szemer\'{e}di's lemma,  there is $M(\ep', m)$ such that,  for some $K$ in the range $m\le K\le M$,  we can find a partition $V_0,\ldots V_K$ which is $\ep'$-regular. After a permutation we can assume that the ordering of the vertices coincides with the ordering of the partitions. We define $p_{i,i}=p_{0,i}=p_{i,0}=0$  and for $1\le i\not= j$,  $p_{i,j}=\rho_G (V_i,V_j)$. This leads to 
$$
g(x,y)=\sum_{\substack{i,j=1\\ i\not=j}}^K \rho_G (V_i,V_j){\bf 1}_{E_i}(x){\bf 1}_{E_j}(y)
$$
to be compared with $f^{G}$ when $V_0,\ldots, V_K$ is an  $ \ep'$ regular partition of $G$. Recall that 
\[
d_\Box(f^G, g) =\sup_{S,T\subset [0,1]} \biggl| \iint_{S\times T}[f^G(x,y)-g(x,y)]dx\,dy   \biggr|. 
\]
Since both $f^G$ and $g$ are constant on sets of the form $[\frac{i}{n},\frac{i+1}{n}]\times [\frac{j}{n},\frac{j+1}{n}]$ it is easy to see that $S$ and $T$ can be restricted to unions of intervals  of the form $[\frac{i}{n},\frac{i+1}{n}]$ i.e.
subsets of $[0,1]$ that represent subsets of  vertices. These subsets will also be denoted by $S$ and $T$. Now, given two such subsets $S$ and $T$, 
\begin{align*}
& \iint_{S\times T}(f^G(x,y)-g(x,y))dx\,dy  \\
& = \sum_{0\le i,j\le K}   \iint_{(S\cap E_i)\times (T\cap E_j)}(f^G(x,y)-g(x,y))dx\,dy.  
\end{align*}
Let $A_{ij}$ denote the $(i,j)$th term in the above sum. 
Since $f^G$ and $g$ both take values in $[0,1]$, therefore for each $i$ and each~$j$,
\[
|A_{i0}| \le |E_i||E_0|, \ \ |A_{0j}|\le |E_0||E_j|.
\]
For the same reason, if $|S\cap E_i| < \ep' |E_i|$ or $|T\cap E_j|< \ep' |E_j|$, then 
\[
|A_{ij}| \le\ep'  |E_i||E_j|.
\]
If $1\le i\ne j$, $|S\cap E_i|\ge \ep' |E_i|$, $|T\cap E_j|\ge \ep' |E_j|$ and the pair $(V_i, V_j)$ is $\ep' $-regular, then 
\begin{align*}
|A_{ij}|&= \biggl|\frac{1}{n^2}\,e_G(S\cap V_i, \, T\cap V_j) -\rho_G(V_i,V_j)|S\cap E_i||T\cap E_j| \biggr|\\
&\le|\rho_G(S\cap V_i, S\cap V_j) - \rho_G(V_i, V_j)||S\cap E_i||T\cap E_j|\\
&\le \ep'|E_i||E_j|. 
\end{align*}
Finally, if either $1\le i=j$ or $(V_i, V_j)$ is not $\ep'$-regular (of which there are at most $K +2\ep' K^2$ cases), then we have the trivial bound $|A_{ij}|\le \frac{a^2}{n^2} \le \frac{1}{K^2}$. A combination of the above estimates gives
\begin{align*}
\sum_{0\le i,j\le K} |A_{ij}|&\le  2|E_0| +\ep' + (K+ 2\ep' K^2) \frac{1}{K^2}\\
&\le (2\ep' +2\ep'+\ep'+K^{-1})
\end{align*}
Thus, 
\[
d_\Box(f^G, g) \le 5\ep'  + K^{-1}\le 5\ep' + m^{-1}
\]
Since $m^{-1} < \ep/2$, we can choose $\ep'$ so that $5\ep' + m^{-1}<\ep$. 
\end{proof}
\begin{lmm}\label{nl2}
Let $\epsilon,m$ and $M$ be as in Lemma \ref{nl1}.
\begin{align*}
\pp_{n,p}(B({\widetilde h},\eta)) \le n! \,\pp_{n,p}(B({\widetilde h},\eta)\cap B({\mathcal V}_{m,M},\epsilon))
\end{align*}
where $B({\mathcal V}_{m,M},\epsilon) =\{g:  \inf_{f\in {\mathcal V}_{m,M}} d_\Box(g,f)\le \epsilon\}$.
\end{lmm}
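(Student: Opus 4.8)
The plan is to exploit two symmetries together with Lemma \ref{nl1}: the exchangeability of the edges of $G(n,p)$, and the invariance of $B({\widetilde h},\eta)$ under relabelling of vertices. First I would note that relabelling $G$ by a permutation $\pi$ of $\{1,\dots,n\}$ corresponds at the graphon level to precomposing $f^G$ with the measure preserving bijection $\sigma_\pi\in\Sigma$ that maps $[\tfrac{r-1}{n},\tfrac{r}{n}]$ onto $[\tfrac{\pi(r)-1}{n},\tfrac{\pi(r)}{n}]$, so that $f^{\pi G}=(f^G)_{\sigma_\pi}$ lies in the same $\Sigma$-orbit as $f^G$ and hence $\tau f^{\pi G}=\tau f^G$. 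Consequently $B({\widetilde h},\eta)=\tau^{-1}S_\square({\widetilde h},\eta)$ satisfies $f^G\in B({\widetilde h},\eta)\iff f^{\pi G}\in B({\widetilde h},\eta)$, and also $\pp_{n,p}(\{G\})=\pp_{n,p}(\{\pi G\})$ by exchangeability.

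Next I would invoke Lemma \ref{nl1} with the given $\epsilon,m,M$ to attach to every labelled graph $G$ on $n$ vertices a permutation $\pi_G$ with $\inf_{g\in{\mathcal V}_{m,M}}d_\Box(f^{\pi_G G},g)\le\epsilon$, i.e.\ $f^{\pi_G G}\in B({\mathcal V}_{m,M},\epsilon)$. Combined with the invariance above, this yields: if $f^G\in B({\widetilde h},\eta)$ then $f^{\pi_G G}\in B({\widetilde h},\eta)\cap B({\mathcal V}_{m,M},\epsilon)$.

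The final step is a counting argument. I would write $\pp_{n,p}(B({\widetilde h},\eta))$ as the sum of $\pp_{n,p}(\{G\})$ over all labelled $G$ with $f^G\in B({\widetilde h},\eta)$, replace each $\pp_{n,p}(\{G\})$ by $\pp_{n,p}(\{\pi_G G\})$ using exchangeability, and then group the resulting terms according to the value $G':=\pi_G G$. Each such $G'$ satisfies $f^{G'}\in B({\widetilde h},\eta)\cap B({\mathcal V}_{m,M},\epsilon)$ by the previous step, and the fibre of the map $G\mapsto\pi_G G$ over a fixed $G'$ is contained in $\{\pi^{-1}G':\pi\in S_n\}$, which has at most $n!$ elements; summing then gives
$$
\pp_{n,p}(B({\widetilde h},\eta))\le n!\,\pp_{n,p}\bigl(B({\widetilde h},\eta)\cap B({\mathcal V}_{m,M},\epsilon)\bigr).
$$

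I expect the only delicate point to be this last multiplicity estimate: although the chosen permutation $\pi_G$ varies with $G$, one must observe that the fibre over any fixed $G'$ still lies in a single symmetric-group orbit and so has at most $n!$ members, making the factor $n!$ uniform. Everything else follows directly from the two symmetries and Lemma \ref{nl1}; in particular, no invariance or regularity of $B({\mathcal V}_{m,M},\epsilon)$ itself is needed --- only that a suitable relabelling of any graph lands inside it.
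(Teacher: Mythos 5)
Your proof is correct and uses essentially the same approach as the paper: exchangeability of $\pp_{n,p}$, permutation-invariance of $B({\widetilde h},\eta)$, and Lemma \ref{nl1} to move every graph in $B({\widetilde h},\eta)$ into $B({\mathcal V}_{m,M},\epsilon)$ via a relabelling. The paper packages the final counting step as the set inclusion $B({\widetilde h},\eta)\subset\bigcup_\pi \pi^{-1}\bigl(B({\widetilde h},\eta)\cap B({\mathcal V}_{m,M},\epsilon)\bigr)$ followed by a union bound, which is the same multiplicity-$n!$ argument you spell out by grouping the sum over fibres of $G\mapsto\pi_G G$.
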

\begin{proof}
The orbit under the permutation group has at most $n!$ elements and they all have the same probability under $\pp_{n,p}$. Moreover by the Lemma \ref{nl1} every orbit meets $B({\mathcal V}_{m,M},\epsilon)$, and  $B({\widetilde h},\eta)$ is invariant under $\sigma\in\Sigma$ and therefore under $\pi$.  Consequently
$$
B({\widetilde h},\eta)\subset\bigcup_\pi \pi^{-1}(B({\widetilde h},\eta)\cap B({\mathcal V}_{m,M},\epsilon))
$$
and the lemma follows.
\end{proof}
\begin{lmm}\label{nl3} There exists a function $\delta({\widetilde h}, \ep)$, depending only on ${\widetilde h}$ and $\ep$, with $\delta ({\widetilde h},\ep)\ra 0$ as $\ep\ra 0$,  such that for each arbitrary but fixed $\ep, m, M$ satisfying Lemma \ref{nl1},
$$
\lim_{\eta\ra 0} \limsup_{n\ra\infty}\frac{1}{n^2}\log\pp_{n,p}(B({\widetilde h},\eta)\cap B({\mathcal V}_{m,M},\epsilon) )\le - I_p({\widetilde h})+\delta({\widetilde h},\ep)$$
\end{lmm}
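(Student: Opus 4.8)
The plan is to deduce the estimate from the elementary weak-topology large deviation principle of Theorem~\ref{weak}, by trapping the set $B(\widetilde h,\eta)\cap B({\mathcal V}_{m,M},\epsilon)$ between itself and a \emph{fixed} weakly closed set $F\subset\mm$ on which $I_p$ is bounded below by essentially $I_p(\widetilde h)$. The mechanism is that membership in $B({\mathcal V}_{m,M},\epsilon)$ forces $f^G$ to be cut-close to its block average over one of finitely many partitions; block-averaging is weakly continuous and does not increase $I_p$, and the loss incurred is controlled through the lower semicontinuity of $I_p$ (Lemma~\ref{lower}).

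Fix $\epsilon,m,M$ as in the statement. For $m\le K\le M$ let $\mathcal{P}_K$ be the partition of $[0,1]$ into the $K$ equal intervals $[\tfrac{i-1}{K},\tfrac iK]$, and for $f\in\mm$ write $f_K:=\ee[f\mid\mathcal{P}_K]$ for its conditional expectation with respect to the product partition $\mathcal{P}_K\otimes\mathcal{P}_K$ (a symmetric, $[0,1]$-valued step function). I would use three facts. \emph{(a) Cut contraction.} For every $h\in\mm$ one has $d_\square(h_K)\le d_\square(h)$: the supremum defining the cut norm of a step function is attained on sets $S,T$ that are unions of blocks of $\mathcal{P}_K$, and for such $S,T$, $\iint_{S\times T}h_K=\iint_{S\times T}h$. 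By the triangle inequality this gives $d_\square(f-f_K)\le 2\,d_\square(f-g)$ whenever $g$ is constant on the blocks of $\mathcal{P}_K\otimes\mathcal{P}_K$. \emph{(b) Weak continuity.} The map $f\mapsto f_K$ is a finite linear combination of the weakly continuous functionals $f\mapsto\iint f\,\mathbf 1_{I\times J}$, with $I,J$ blocks of $\mathcal{P}_K$; hence it is continuous from the weak topology of $\mm$ into $(\mm,d_\square)$, and therefore $f\mapsto\delta_\square(\tau f_K,\widetilde h)$ is weakly continuous. \emph{(c) Averaging does not increase $I_p$.} Since $I_p$ is convex on $[0,1]$, conditional Jensen applied to \eqref{ipdef2} gives $I_p(f)\ge I_p(f_K)$ for all $f\in\mm$.

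Granting (a)--(c), set $\phi(\rho):=\inf\{I_p(\widetilde g):\widetilde g\in\mmm,\ \delta_\square(\widetilde g,\widetilde h)\le\rho\}$, which is nonincreasing and satisfies $\phi(\rho)\le I_p(\widetilde h)$ for all $\rho>0$, and put
$$
F:=\bigcup_{K=m}^{M}\bigl\{f\in\mm:\ \delta_\square(\tau f_K,\widetilde h)\le 3\epsilon+\eta\bigr\},
$$
weakly closed by (b). If $f^G\in B({\mathcal V}_{m,M},\epsilon)$, then $d_\square(f^G,g)\le\epsilon$ for some $g\in{\mathcal V}_K$ with $m\le K\le M$; the $n$-dependent partition used in the definition of ${\mathcal V}_K$ agrees with $\mathcal{P}_K$ off a set whose measure tends to $0$ with $n$, so $\ee[\,\cdot\mid\mathcal{P}_K^{n}]$ and $\ee[\,\cdot\mid\mathcal{P}_K]$ differ in $L^1$ by some $\rho_n\to 0$, uniformly in $f$, and (a) gives $d_\square(f^G,f^G_K)\le 2\epsilon+\rho_n$. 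If moreover $\widetilde{f^G}\in S_\square(\widetilde h,\eta)$, then $\delta_\square(\tau f^G_K,\widetilde h)\le 2\epsilon+\eta+\rho_n\le 3\epsilon+\eta$ once $n$ is large, so $B(\widetilde h,\eta)\cap B({\mathcal V}_{m,M},\epsilon)\subseteq F$ for all large $n$. On the other hand, for any $f\in F$, (c) and the definition of $F$ give $I_p(f)\ge I_p(f_K)=I_p(\tau f_K)\ge\phi(3\epsilon+\eta)$, so $\inf_F I_p\ge\phi(3\epsilon+\eta)$. Theorem~\ref{weak} therefore yields
$$
\limsup_{n\to\infty}\frac1{n^2}\log\pp_{n,p}\bigl(B(\widetilde h,\eta)\cap B({\mathcal V}_{m,M},\epsilon)\bigr)\le-\phi(3\epsilon+\eta).
$$
Letting $\eta\downarrow 0$ and using monotonicity of $\phi$, the bound becomes $-\phi(3\epsilon^{+})$ with $\phi(3\epsilon^{+}):=\lim_{\rho\downarrow 3\epsilon}\phi(\rho)$; I then set $\delta(\widetilde h,\epsilon):=I_p(\widetilde h)-\phi(3\epsilon^{+})\ge 0$, which depends only on $\widetilde h$ and $\epsilon$ and produces exactly the claimed inequality. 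Finally $\delta(\widetilde h,\epsilon)\to 0$ as $\epsilon\to 0$: if not, then $\lim_{\rho\downarrow 0}\phi(\rho)<I_p(\widetilde h)$, so one could choose $\widetilde g_k\to\widetilde h$ in $(\mmm,\delta_\square)$ with $\limsup_k I_p(\widetilde g_k)<I_p(\widetilde h)$, contradicting the lower semicontinuity of $I_p$ from Lemma~\ref{lower}. (When $I_p(\widetilde h)=+\infty$, the same argument shows $\phi(3\epsilon^{+})\to\infty$, which is all the sequel needs.)

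I expect the main obstacle to be fact (a): pinning down the constant $2$, and more bothersomely dealing with the $n$-dependence of the Szemer\'edi partitions implicit in ${\mathcal V}_{m,M}$, which is what forces the passage to the fixed partitions $\mathcal{P}_K$ together with the uniform error $\rho_n$. Once (a)--(c) are available, the argument is a direct application of Theorem~\ref{weak} and Lemma~\ref{lower}.
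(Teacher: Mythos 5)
Your argument takes a genuinely different route from the paper's. The paper does not introduce a block‑averaging map at all: it observes that for a fixed $K$ the set $\mathcal V_K$ is compact in $L^1$, covers it by finitely many $L^1$-balls (hence cut-balls) of radius $\ep$, notes that a cut-ball $B(g,2\ep)$ around a fixed $g$ is weakly closed (the cut norm is a supremum of weakly continuous affine functionals), and then feeds each such ball directly into Theorem~\ref{weak} together with the inclusion $B(g,2\ep)\subset B(\widetilde h,5\ep)$ and lower semicontinuity of $I_p$. Your scheme instead manufactures a single weakly closed set $F$ via the conditional-expectation map $f\mapsto f_K$, using cut-contraction of block averaging and Jensen for $I_p$. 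Both arrive at the same place through Theorem~\ref{weak} and Lemma~\ref{lower}; the paper's route is a bit more elementary (no Jensen, no block average), yours is conceptually tidier in that it produces one closed set rather than a finite cover.

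There is, however, one concrete error in your write-up. You assert that ``the $n$-dependent partition used in the definition of $\mathcal V_K$ agrees with $\mathcal P_K$ off a set whose measure tends to $0$ with $n$,'' producing an error $\rho_n\to0$. That is false: Szemer\'edi's lemma only guarantees $|V_0|\le\ep n$, and $|V_0|/n$ need not vanish as $n\to\infty$. The exceptional interval $E_0$ has length up to $\ep$, and the remaining intervals $E_1,\dots,E_K$ are offset from the blocks of $\mathcal P_K$ by up to $\ep$ each, so the total symmetric difference between the two partitions is of order $\ep$, uniformly in $n$, not $o(1)$. Consequently $\bigl\|\ee[\,\cdot\mid\mathcal P_K^n]-\ee[\,\cdot\mid\mathcal P_K]\bigr\|_{L^1}$ is bounded by $C\ep$ but does not go to zero. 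This does not sink the argument — since your final bound is allowed to depend on $\ep$, you can simply replace $\rho_n$ by $C\ep$ and widen the threshold defining $F$ from $3\ep+\eta$ to $(2+C)\ep+\eta$ — but as stated the step is incorrect and should be repaired before it can be accepted.
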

\begin{proof}
Since ${\mathcal V}_{m,M}$ is a finite union  $\bigcup_{m\le K\le M}{\mathcal V}_{K}$ it is sufficient to prove that  for each $K$
$$
\lim_{\eta\ra 0}\limsup_{n\ra\infty}\frac{1}{n^2}\log\pp_{n,p}(B({\widetilde h},\eta)\cap B({\mathcal V}_{K},\epsilon) )\le -I_p({\widetilde h})+\delta({\widetilde h},\epsilon)$$
and 
$\delta({\widetilde h}, \ep)\ra 0$ as $\ep\ra 0$. For fixed $K$, ${\mathcal V}_K$ consists of a compact set of functions in $L_1([0,1]^2)$ and can be covered by a 
finite number of spheres of radius $\ep$ in $L_1$ and therefore in $\mm$. It is therefore sufficient to show that for
fixed $K$ and $g\in {\mathcal V}_K$
$$
\lim_{\eta\ra 0} \limsup_{n\ra\infty}\frac{1}{n^2}\log\pp_{n,p}( B({\widetilde h},\eta)\cap B(g,2\epsilon) )\le -I_p({\widetilde h})+\delta({\widetilde h}, \epsilon)
$$
We can assume that $B({\widetilde h},\eta)\cap B(g,2\epsilon)\not= \emptyset$. Therefore $g\in B({\widetilde h},\eta+2\ep)$. Since $\eta\ra 0$ we can assume $\eta<\ep$ so that $g\in B({\widetilde h},3\ep)$. By lower semi-continuity of $I_p(\cdot)$, $I_p(f)\ge I_p({\widetilde h})-\delta( {\widetilde h}, \ep)$ on $B(g,2\epsilon)\subset B({\widetilde h}, 5\epsilon)$ 
and $\delta( {\widetilde h}, \ep)\ra 0$ as $\ep\ra 0$. We note that $B(g,2\epsilon)\subset \mm$ is weakly closed and therefore by the upper bound in Theorem \ref{weak}, 
\begin{align*}
\lim_{\eta\ra 0}&\limsup_{n\ra\infty}\frac{1}{n^2}\log\pp_{n,p}( B({\widetilde h},\eta)\cap B(g,2\epsilon) )\le \limsup_{n\ra\infty}\frac{1}{n^2}\log\pp_{n,p}( B(g,2\epsilon) )\\
& \le -\inf_{f\in B(g,2\epsilon)} I_p(f) \le -\inf_{f\in B({\widetilde h},5\epsilon)} I_p(f)\le -I_p({\widetilde h})+\delta({\widetilde h},\ep)
\end{align*}
where $\delta({\widetilde h},\ep)\ra 0$ as $\ep\ra 0$. \end{proof}
Lemma \ref{nl2} and Lemma \ref{nl3} yield \eqref{upperbound}, which proves the upper bound in Theorem \ref{main}.

\subsection{Proof of the lower bound in Theorem \ref{main}}
Let $h(x,y)\in\mm$ be given.  We  define 
$$p^{(n)}_{i,j}=n^2\int\int_{[\frac{i-1}{n},\frac{i}{n}]\times [\frac{j-1}{n},\frac{j}{n}]} h(x,y) dx dy$$
and the corresponding function $h_n(x,y)\in\mm$ by

$$
h_n(x,y)=\sum_{i,j} p^{(n)}_{i,j} {\bf 1}_{[\frac{i-1}{n},\frac{i}{n}]}(x){\bf 1}_{[\frac{j-1}{n},\frac{j}{n}]}(y).
$$
Since $\|h_n-h\|_{L_1([0,1]^2)}\to 0$, it follows that $d_\Box (h_n,h)\to 0$. It is therefore sufficient to prove that for any $\ep>0$ 
$$
\liminf_{n\ra\infty}\frac{1}{n^2}\log  \pp(d_\Box (f^{G(n,p)}, h_n)\le \ep)\ge - I_p(h).
$$
We define an inhomogeneous random graph where the edge connecting  the vertices $i$ and $j$ is present  with probability $p^{(n)}_{i,j}$.  Different  edges are  independent.  If $\xi_{i,j}=1$ when the edge connecting $i,j$ is present and  $0$ otherwise then $\xi_{i,j}$ are independent Binomial random variables with $\pp(\xi_{i,j}=1)=p^{(n)}_{i,j}$. We denote by $\pp_{n,h}$ the measure on $\mm$ induced by
$$
f_n(x,y):=\sum_{i,j\atop i\not= j}\xi_{i,j}{\bf 1}_{[\frac{i-1}{n},\frac{i}{n}]}(x){\bf 1}_{[\frac{j-1}{n},\frac{j}{n}]}(y)
$$
If $A$ and $B$ are subsets of $\{1,\ldots, n\}$, it is straightforward to calculate 
\begin{align*}
\psi_n(\lambda)&:=\frac{1}{n^2}\log \ee^{\pp_{n,h}}\biggl[\exp\biggl( \lambda \sum_{i\in A, j\in B\atop i\not=j}(\xi_{i,j}-p^{(n)}_{i,j})\biggr)\biggr]\\ 
&=\frac{1}{n^2}\biggl[\sum_{i,j\in A\cap B\atop i> j}\log \ee^{\pp_{n,h}}\Big[\exp (2\lambda  (\xi_{i,j}-p^{(n)}_{i,j}))  \Big]\\
&\qquad +\sum_{\substack{i\in A\cap B,\, j\in B\cap A^c  \\ \text{ or } i\in A\cap B^c,\, j\in A\cap B \\ \text{ or }i\in A\cap B^c,\, j\in A^c\cap B}}\log \ee^{\pp_{n,h}}\Big[\exp (\lambda  (\xi_{i,j}-p^{(n)}_{i,j})) \Big]\biggr].
\end{align*}
Each term in the sum is easily estimated by $\frac{\lambda^2}{2}$, providing an estimate of the form
$$
\pp_{n,h}\biggl(\biggl|\iint_{\widetilde{A}\times \widetilde{B}} (f_n-h_n) dxdy\biggr|\ge \ep\biggr)\le e^{-\frac{n^2\ep^2}{2}},
$$
where $\widetilde{A} = \cup_{i\in A} [\frac{i-1}{n}, \frac{i}{n}]$ and $\widetilde{B}$ is defined similarly. 
Since the number of sets like $\widetilde{A}\times \widetilde{B}$ is only $2^{2n}$ it follows that
$$
\pp_{n,h}(d_\Box(f_n,h_n)\ge \ep)\ra 0
$$
as $n\ra\infty$. Now the lower bound is easily established by a simple tilting argument. Denoting by $B_{\epsilon, n}$ the set
$\{f: d_\Box(f,h_n)\le \ep\}$
\begin{align*}
\pp_{n,p}(B_{\ep,n})&=\int_{B_{\ep,n}} d\pp_{n,p}=\int_{B_{\ep,n}} e^{-\log\frac{d\pp_{n,h}}{d\pp_{n,p}}}d\pp_{n,h}\\
&=\pp_{n,h}(B_{\ep,n})\frac{1}{\pp_{n,h}(B_{\ep,n})}\int_{B_{\ep,n}} e^{-\log\frac{d\pp_{n,h}}{d\pp_{n,p}}}d\pp_{n,h}.
\end{align*}
By Jensen's inequality
$$
\log \pp_{n,p}(B_{\ep,n})\ge \log \pp_{n,h}(B_{\ep,n})-\frac{1}{\pp_{n,h}(B_{\ep,n})}\int_{B_{\ep,n}} \log\frac{d\pp_{n,h}}{d\pp_{n,p}}d\pp_{n,h}.
$$
Since $\pp_{n,h}(B_{\ep,n})\ra 1$, it is easy to see that
$$
\liminf_{n\ra\infty}\frac{1}{n^2}\log \pp_{n,p}(B_{\ep,n})\ge -\lim_{n\ra\infty}\frac{1}{n^2}\int \log\frac{d\pp_{n,h}}{d\pp_{n,p}}d\pp_{n,h}
$$
The entropy cost of tilting (i.e.\ the integral in the preceding display) is
$$
\frac{1}{n^2}\sum_{i>j}\biggl(p^{(n)}_{i,j}\log\frac{p^{(n)}_{i,j}}{p}+(1-p^{(n)}_{i,j})\log\frac{1-p^{(n)}_{i,j}}{1-p}\biggr)
$$
which converges to $I_p(h)$ as $n\ra\infty$. This proves the lower bound.

\section{Conditional distributions}
Theorem \ref{main} gives estimates of the probabilities of rare events for $G(n,p)$. However, it does not quite answer the following question: given that some particular rare event has occurred, what does the graph look like? Naturally, one might expect that if $G(n,p)\in \WF$ for some closed set $\WF\subseteq\mmm$ satisfying 
\begin{equation}\label{inf0}
\inf_{\wh\in \WF^o} I_p(\wh) = \inf_{\wh\in \WF} I_p(\wh)> 0,
\end{equation}
then $G(n,p)$ should resemble one of the minimizers of $I_p$ in $\WF$. (Here $\WF^o$ denotes the interior of $\WF$, as usual.) In other words, given that $G(n,p)\in \WF$, one might expect that 
$\delta_\Box(G(n,p), \WF^*) \approx 0$, 
where $\WF^*$ is the set of minimizers of $I_p$ in $\WF$ and 
\[
\delta_\Box(G(n,p),\WF^*) := \inf_{\wh\in \WF^*} \delta_\Box(G(n,p), \wh). 
\]
However, it is not obvious that a minimizer must exist in $\WF$. Here is where  the compactness of $\mmm$ comes to the rescue yet one more time: since the function $I_p$ is lower semicontinuous on $\WF$ and $\WF$ is closed, therefore a minimizer must necessarily exist. The following theorem formalizes this argument.
\begin{thm}\label{conditional}
Take any $p\in (0,1)$. Let $\WF$ be a closed subset of $\mmm$ satisfying~\eqref{inf0}. Let $\WF^*$ be the subset of $\WF$ where $I_p$ is minimized. Then $\WF^*$ is non-empty and compact, and for each $n$,  and each $\ep >0$, 
\[
\pp(\delta_\Box(G(n,p), \WF^*) \ge \ep \mid G(n,p)\in \WF) \le e^{-C(\ep, \WF) n^2}
\]
where $C(\ep, \WF)$ is a positive constant depending only on $\ep$ and $\WF$. In particular, if $\WF^*$ contains only one element $\wh^*$, then the conditional distribution of $G(n,p)$ given $G(n,p)\in \WF$ converges to the point mass at $\wh^*$ as $n\ra\infty$. 
\end{thm}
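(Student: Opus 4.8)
The plan is to derive Theorem \ref{conditional} entirely from the large deviation principle of Theorem \ref{main} and the compactness of $(\mmm,\delta_\Box)$ from Theorem \ref{compact}; essentially nothing beyond these two facts is needed. Writing $L:=\inf_{\wh\in\WF}I_p(\wh)$, I would first record that $0<L<\infty$: positivity is exactly hypothesis \eqref{inf0}, and finiteness holds because $u\mapsto I_p(u)$ is continuous (hence bounded) on $[0,1]$, so $I_p$ is bounded on $\mmm$. Existence and compactness of $\WF^*$ are then immediate: $\WF$ is closed in the compact space $\mmm$, hence compact, and $I_p$ is lower semicontinuous on $\mmm$ by Lemma \ref{lower}, so $I_p$ attains its minimum $L$ on $\WF$, giving $\WF^*\ne\emptyset$; and $\WF^*=\WF\cap\{\wh:I_p(\wh)\le L\}$ is the intersection of $\WF$ with a closed set (lower semicontinuity once more), hence closed and therefore compact.

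The one genuinely quantitative step --- and the crux of the argument --- is to show that $I_p$ is bounded away from $L$ on the part of $\WF$ that is $\ep$-far from $\WF^*$. Concretely, I would set $\WF_\ep:=\{\wh\in\WF:\delta_\Box(\wh,\WF^*)\ge\ep\}$ and show that $L_\ep:=\inf_{\wh\in\WF_\ep}I_p(\wh)$ satisfies $L_\ep>L$ (interpreting $L_\ep=+\infty$ when $\WF_\ep=\emptyset$). Since $\wh\mapsto\delta_\Box(\wh,\WF^*)$ is $1$-Lipschitz, $\WF_\ep$ is closed, hence compact, so $I_p$ attains its infimum over it at some $\wh_0$ (when $\WF_\ep\ne\emptyset$); were $I_p(\wh_0)=L$, then $\wh_0$ would be a minimizer of $I_p$ over $\WF$, i.e.\ $\wh_0\in\WF^*$, contradicting $\delta_\Box(\wh_0,\WF^*)\ge\ep>0$. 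So $L_\ep>L$; set $3\delta:=L_\ep-L$, a quantity depending only on $\ep$ and $\WF$.

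To finish I would bound, for each $n$ with $\pp(G(n,p)\in\WF)>0$,
$$
\pp\bigl(\delta_\Box(G(n,p),\WF^*)\ge\ep \,\big|\, G(n,p)\in\WF\bigr)
=\frac{{\widetilde\pp}_{n,p}(\WF_\ep)}{{\widetilde\pp}_{n,p}(\WF)}
\le\frac{{\widetilde\pp}_{n,p}(\WF_\ep)}{{\widetilde\pp}_{n,p}(\WF^o)},
$$
applying the LDP upper bound \eqref{closed} to the closed set $\WF_\ep$ to get $\limsup_n n^{-2}\log{\widetilde\pp}_{n,p}(\WF_\ep)\le-L_\ep=-L-3\delta$, and the LDP lower bound \eqref{open} to the open set $\WF^o$, together with hypothesis \eqref{inf0}, to get $\liminf_n n^{-2}\log{\widetilde\pp}_{n,p}(\WF^o)\ge-\inf_{\WF^o}I_p=-L$. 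This produces $N=N(\ep,\WF)$ such that $\pp(G(n,p)\in\WF)\ge{\widetilde\pp}_{n,p}(\WF^o)>0$ and the above ratio is at most $e^{-2\delta n^2}$ for all $n\ge N$; for the finitely many smaller $n$ the conditioning is either undefined (statement vacuous) or the bound is arranged by shrinking the constant, which yields $\pp(\delta_\Box(G(n,p),\WF^*)\ge\ep\mid G(n,p)\in\WF)\le e^{-C(\ep,\WF)n^2}$ with some $C(\ep,\WF)>0$. The ``in particular'' clause is then automatic: when $\WF^*=\{\wh^*\}$ the bound says that ${\widetilde f}^{G(n,p)}\to\wh^*$ in $\delta_\Box$-probability under the conditional law, which, the limit being deterministic, is the same as convergence of the conditional law to the point mass at $\wh^*$.

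I do not expect a serious obstacle here: the theorem is a soft corollary of Theorem \ref{main}. The only thing that really needs an argument is the strict gap $L_\ep>L$ in the second paragraph, and that is exactly where compactness of $\mmm$ is indispensable --- it is used once to make $\WF^*$ non-empty and compact, and once more to make the infimum of $I_p$ over $\WF_\ep$ attained, hence strictly larger than $L$. Complementarily, hypothesis \eqref{inf0} is what pins the exponential rate of the denominator at $-L$, so that the quotient decays at the full rate $2\delta$. The only mildly delicate point is cosmetic: turning the $\limsup$/$\liminf$ statements of the LDP into an estimate valid for every $n$, which is taken care of by the finitely-many-small-$n$ bookkeeping above.
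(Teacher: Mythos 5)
Your proposal is correct and follows essentially the same route as the paper: use compactness of $\mmm$ plus lower semicontinuity of $I_p$ to get existence and compactness of $\WF^*$, introduce $\WF_\ep=\{\wh\in\WF:\delta_\Box(\wh,\WF^*)\ge\ep\}$, apply the LDP upper bound to $\WF_\ep$ and (via hypothesis \eqref{inf0}) the lower bound to $\WF^o$, and derive the strict gap $\inf_{\WF_\ep}I_p>\inf_{\WF}I_p$ by the same compactness-plus-contradiction argument. The only difference is that you explicitly spell out the finite-$n$ bookkeeping that converts the asymptotic $\limsup$ bound into a uniform-in-$n$ inequality, a point the paper leaves implicit.
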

\begin{proof}
Since $\mmm$ is compact and $\WF$ is a closed subset, therefore $\WF$ is also compact. Since $I_p$ is a lower semicontinuous function on $\WF$ (Lemma \ref{lower}) and $\WF$ is compact, it must attain its minimum on $\WF$. Thus, $\WF^*$ is non-empty. By the lower semicontinuity of $I_p$, $\WF^*$ is closed (and hence compact). Fix $\ep >0$ and let
\[
\WF_\ep := \{\wh \in \WF: \delta_\Box(\wh, \WF^*) \ge \ep\}.
\]
Then $\WF_\ep$ is again a closed subset. Observe that 
\begin{align*}
\pp(\delta_\Box(G(n,p), \WF^*) \ge \ep \mid G(n,p)\in \WF) &= \frac{\pp(G(n,p)\in \WF_\ep)}{\pp(G(n,p)\in \WF)}. 
\end{align*}
Thus, with 
\[
I_1 :=  \inf_{\wh\in \WF} I_p(\wh), \ \ I_2 := \inf_{\wh\in \WF_\ep} I_p(\wh),
\] 
Theorem \ref{main} and condition \eqref{inf0} give
\[
\limsup_{n\ra\infty} \frac{1}{n^2} \log \pp(\delta_\Box(G(n,p), \WF^*) \ge \ep \mid G(n,p)\in \WF) \le I_1-I_2. 
\]
The proof will be complete if it is shown that $I_1 < I_2$. 

Now clearly, $I_1\le I_2$. If $I_1=I_2$, the compactness of $\WF_\ep$ implies that there exists $\wh\in \WF_\ep$ satisfying $I_p(\wh)=I_2$. However, this means that $\wh\in \WF^*$ and hence $\WF_\ep \cap \WF^* \ne \emptyset$, which is impossible. 
\end{proof}

\section{Application to triangle counts}
\subsection{Brief history of the problem}
Let $T_{n,p}$ be the number of triangles in $G(n,p)$. The primary objective of this section is to compute the large deviation rate function for the upper tail of $T_{n,p}$ when $p$ remains fixed and $n \ra \infty$. In other words, given $p\in [0,1]$ and $\ep >0$, we wish to evaluate the limit
\begin{equation}\label{problem}
\lim_{n\ra\infty} \frac{1}{n^2}\log \pp(T_{n,p}\ge (1+\ep)\ee(T_{n,p}))
\end{equation}
as a function of $p$ and $\ep$. 

The problem of estimating tail probabilities like $\pp(T_{n,p}\ge (1+\ep)\ee(T_{n,p}))$ has been studied extensively in the random graphs literature, particularly in the case when $p$ is allowed to tend to zero as $n \ra \infty$. Computing upper and lower bounds on such tail probabilities that are sharp up to constants in the exponent was a prominent open problem in this area until until it was solved recently in~\cite{chatterjee10}. Let us refer to the  paper  \cite{chatterjee10} for a survey of the aforementioned literature. 

When $p$ is fixed, computing sharp upper and lower bounds is relatively easy. The difficult problem in this case is the exact evaluation of the limit~\eqref{problem}. The first progress in this direction was made in~\cite{chatterjeedey09} where it was shown that, given $p\in (0,1)$, there exist $p^3/6< t'\le t''< 1/6$ such that for all $t\in (p^3/6,t')\cup(t'',1/6)$, 
\begin{align}\label{cd}
\lim_{n\ra \infty}\frac{1}{n^2}\log \pp(T_{n,p} \ge tn^3) = - I_p((6t)^{1/3}), 
\end{align}
when $I_p$ is the entropy function defined in \eqref{ipdef1}.
Explicit formulas for $p'$ and $p''$ are also given in \cite{chatterjeedey09}. Unfortunately, the result does not cover all values of $(p,t)$; and neither is the above formula true for all $(p,t)$, as we shall see below.

There is a related unpublished manuscript by Bolthausen, Comets  and Dembo \cite{bolthausenetal09} on large deviations for subgraph counts. As of now, to the best of our knowledge, the authors of \cite{bolthausenetal09} have only looked at subgraphs that do not complete loops, like $2$-stars. Besides \cite{chatterjeedey09} and \cite{bolthausenetal09}, we know of no other papers that attack the exact evaluation of \eqref{problem} (or equivalently, \eqref{cd}).

\subsection{Exact large deviations for the upper tail} In this subsection, the limit~\eqref{cd} is evaluated for all values of $p$ and $t$. It comes as the solution of the following  variational problem. Let $\mm$, $\mmm$ and $\delta_\Box$ be defined as in Section \ref{intro}.  For each $f\in \mm$, let
\[
T(f) := \frac{1}{6} \int_0^1\int_0^1\int_0^1 f(x,y) f(y,z) f(z,x) \;dx\;dy\;dz
\]
and let $I_p(f)$ be defined as in \eqref{ipdef2}. Note that $T$ can be defined on $\mmm$ simply by letting $T(\wf) := T(f)$, because $T$ is a continuous map on $\mm$  under the $d_\Box$ pseudometric (Theorem 3.7 in \cite{borgsetal08}). 

For each $p\in [0,1]$ and $t\in [0,1/6)$, let
\begin{equation}\label{phidef}
\phi(p,t) := \inf\{I_p(f) : f\in \mm,\;T(f) \ge t\}. 
\end{equation}
For $t \ge 1/6$, let $\phi(p,t)=\infty$. The following result gives the large deviation rate function for the upper tail of $T_{n,p}$. (Note that this is just an illustrative example. Theorem \ref{main} can be used to derive large deviations for any subgraph count, or even joint large deviations for the counts of more than one subgraph.)
\begin{thm}\label{triangle}
Let $G(n,p)$ be the Erd\H{o}s-R\'enyi random graph on $n$ vertices with edge probability $p$. Let $T_{n,p}$ denote the number of triangles in $G(n,p)$. Let $\phi$ be defined as above. Then for each $p\in (0,1)$ and each $t\ge 0$, 
\begin{align*}
\lim_{n\ra\infty} \frac{1}{n^2}\log \pp(T_{n,p}\ge tn^3) =  -\phi(p,t). 
\end{align*}
Next, take any $p\in (0,1)$ and $t\in (p^3/6, 1/6)$. Let $F_{p,t}^*$ be the set of minimizers for the variational problem \eqref{phidef} and $\WF_{p,t}^*$ be its image in $\mmm$. Then $\WF_{p,t}^*$ is a non-empty compact subset of $\mmm$. Moreover, for each $\ep >0$ there exists a positive constant $C(\ep, p,t)$ depending only on $\ep$, $p$ and $t$ such that for any~$n$,
\[
\pp(\delta_\Box(G(n,p), \WF_{p,t}^*) \ge \ep\mid T_{n,p}\ge tn^3) \le e^{-C(\ep,p,t)n^2}. 
\]
\end{thm}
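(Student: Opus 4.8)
The plan is to derive Theorem~\ref{triangle} from the abstract large deviation principle (Theorem~\ref{main}), the conditioning statement (Theorem~\ref{conditional}), the compactness of $\mmm$ (Theorem~\ref{compact}), and the continuity of the triangle functional $T$ on $(\mmm,\delta_\Box)$ recorded above. The key initial observation is that $T(f^{G(n,p)})=T_{n,p}/n^3$ \emph{exactly}: a simple graph has no loops, so every homomorphism of the triangle $K_3$ into $G$ is automatically injective, whence $|\hom(K_3,G)|=6\,T_{n,p}$ and $t(K_3,f^G)=t(K_3,G)=6T_{n,p}/n^3$. Since $T$ is constant on $\sim$-orbits this yields $\pp(T_{n,p}\ge tn^3)={\widetilde\pp}_{n,p}(\{\wh\in\mmm:T(\wh)\ge t\})$, and because $T$ is continuous the set $\{T\ge t\}$ is closed while $\{T>s\}$ is open. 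The cases $t=0$ (both sides equal $0$) and $t\ge 1/6$ (here $T_{n,p}\le\binom n3<n^3/6$, so the probability is $0$ while $\phi(p,t)=\infty$ by convention) are immediate, so I assume $t\in(0,1/6)$.

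Next I would run the two halves of Theorem~\ref{main}. The upper bound applied to the closed set $\{T\ge t\}$ gives
\[
\limsup_{n\to\infty}\frac1{n^2}\log\pp(T_{n,p}\ge tn^3)\le-\inf_{T(\wh)\ge t}I_p(\wh)=-\phi(p,t),
\]
while the lower bound applied to the open set $\{T>t\}$, together with $\pp(T_{n,p}>tn^3)\le\pp(T_{n,p}\ge tn^3)$, gives
\[
\liminf_{n\to\infty}\frac1{n^2}\log\pp(T_{n,p}\ge tn^3)\ge-\inf_{T(\wh)>t}I_p(\wh).
\]
So everything reduces to the identity $\inf_{T(\wh)>t}I_p(\wh)=\inf_{T(\wh)\ge t}I_p(\wh)=\phi(p,t)$ --- equivalently, to the right-continuity of $\phi(p,\cdot)$ on $[0,1/6)$ --- and \emph{this is the step I expect to be the main obstacle}: the upper bound sees only the closed constraint set and the lower bound only the open one, and a priori tightening $T\ge t$ to $T>t$ could raise the value of $I_p$.

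To prove right-continuity at $t\in(0,1/6)$, note first that $\{T\ge s\}$ is a compact subset of $\mmm$ (Theorem~\ref{compact}) and $I_p$ is lower semicontinuous (Lemma~\ref{lower}), so $\phi(p,s)$ is attained for each $s<1/6$; also $\phi(p,\cdot)$ is non-decreasing, so $\inf_{T>t}I_p=\inf_{s>t}\phi(p,s)$. Let $f^*\in\mm$ attain $\phi(p,t)$. If $T(f^*)>t$ there is nothing to prove, since then $\phi(p,\cdot)\equiv\phi(p,t)$ on $[t,T(f^*)]$. If $T(f^*)=t$, I would perturb toward the all-ones graphon: put $g_\lambda:=(1-\lambda)f^*+\lambda$ (the convex combination of $f^*$ with the constant graphon $1$), $\lambda\in(0,1]$. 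Then (i) by convexity of $I_p$ and the finiteness of $I_p(1)=\tfrac12\log(1/p)$ one has $\limsup_{\lambda\downarrow0}I_p(g_\lambda)\le I_p(f^*)=\phi(p,t)$; and (ii) since $g_\lambda\ge f^*$ pointwise and $g_\lambda\ge\lambda>0$ everywhere, the integrand of
\[
6\bigl(T(g_\lambda)-T(f^*)\bigr)=\int_{[0,1]^3}\bigl[g_\lambda(x,y)g_\lambda(y,z)g_\lambda(z,x)-f^*(x,y)f^*(y,z)f^*(z,x)\bigr]\,dx\,dy\,dz
\]
is nonnegative and is strictly positive except on the set where $f^*(x,y)=f^*(y,z)=f^*(z,x)=1$, which has measure at most $|\{f^*=1\}|<1$ because $T(f^*)=t<1/6$ forces $f^*\not\equiv1$; hence $T(g_\lambda)>t$ for every $\lambda\in(0,1)$. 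Combining (i) and (ii), for all sufficiently small $\lambda$ the graphon $g_\lambda$ satisfies $T(g_\lambda)>t$ and $I_p(g_\lambda)<\phi(p,t)+\ep$, so $\inf_{s>t}\phi(p,s)\le\phi(p,t)$. With the reverse inequality from monotonicity this gives the identity, hence the limit in the theorem.

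For the conditional statement, fix $p\in(0,1)$ and $t\in(p^3/6,1/6)$ and set $\WF:=\{\wh\in\mmm:T(\wh)\ge t\}$, which is closed. I would check that $\WF$ satisfies the hypothesis~\eqref{inf0}: the nonempty open set $\{T>t\}$ is contained in $\WF$, so by the identity just proved $\inf_{\WF^o}I_p\le\inf_{T>t}I_p=\phi(p,t)=\inf_{\WF}I_p$, while the reverse inequality is trivial; and $\phi(p,t)>0$ because the only zero of $I_p$ is the constant graphon $p$ (as $I_p(u)>0$ for $u\ne p$), which has $T=p^3/6<t$ and hence lies outside $\WF$. Theorem~\ref{conditional} then applies verbatim: $\WF^*:=\{\wh\in\WF:I_p(\wh)=\phi(p,t)\}$ is non-empty and compact, and $\pp(\delta_\Box(G(n,p),\WF^*)\ge\ep\mid G(n,p)\in\WF)\le e^{-C(\ep,p,t)n^2}$ for every $n$, with $C(\ep,p,t)>0$. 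Finally, $\{G(n,p)\in\WF\}$ is exactly $\{T_{n,p}\ge tn^3\}$ by the identity of the first paragraph, and $\WF^*$ is precisely the image $\WF_{p,t}^*$ of the minimizer set $F_{p,t}^*$ of~\eqref{phidef}, since $I_p$ and $T$ are well defined on $\mmm$; this is the assertion of the theorem.
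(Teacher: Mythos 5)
Your proposal is correct and follows essentially the same route as the paper: identify $T_{n,p}/n^3$ with $T(f^{G(n,p)})$, apply the upper bound of Theorem~\ref{main} to the closed set $\{T\ge t\}$ and the lower bound to the open set $\{T>t\}$, close the gap by a continuity argument, and then invoke Theorem~\ref{conditional} after checking hypothesis~\eqref{inf0}. The one place where you genuinely deviate is in how the gap between $\inf_{T>t}I_p$ and $\phi(p,t)=\inf_{T\ge t}I_p$ is closed: the paper defers to Proposition~\ref{properties}(i), while you give a self-contained argument by taking a minimizer $f^*$ (which exists by compactness and lower semicontinuity) and perturbing toward the all-ones graphon, $g_\lambda=(1-\lambda)f^*+\lambda$. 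This is in fact the very same perturbation $f^\delta=f+\delta(1-f)$ the paper uses in the proof of Proposition~\ref{properties}; the paper derives the quantitative bound $T(f^\delta)\ge T(f)(1-\delta^3)+\delta^3/6$, whereas you argue qualitatively that $T(g_\lambda)>T(f^*)$ whenever $T(f^*)<1/6$. Both suffice for right-continuity, and your version is arguably a little leaner for the purpose at hand, since it proves exactly what is needed rather than full continuity; the paper's quantitative version is re-used for the left-continuity and strict-monotonicity statements in parts (ii) and (iii) of Proposition~\ref{properties}, which you do not need here. Your verification of~\eqref{inf0}, including the observation that $\phi(p,t)>0$ because the unique zero $\widetilde p$ of $I_p$ on $\mmm$ has $T=p^3/6<t$ and the infimum over the compact set $\WF$ is attained, is sound and matches the paper's implicit reasoning.
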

\begin{proof}
Let $F := \{f\in \mm: T(f)\ge t\}$. By Theorem 3.7 in \cite{borgsetal08}, $F$ is a closed subset of $\mm$. Therefore by Theorem \ref{main}, 
\begin{align*}
\limsup_{n\ra\infty} \frac{1}{n^2}\log \pp(T_{n,p}\ge tn^3) &= \limsup_{n\ra\infty} \frac{1}{n^2}\log \pp(G(n,p)\in F)\\
&\le -\inf_{h\in F} I_p(h) = -\phi(p,t). 
\end{align*}
Next, let $U := \{f\in \mm: T(f)> t\}$. Again by Theorem 3.7 of \cite{borgsetal08}, $U$ is an open set. Therefore by Theorem \ref{main}, for each $\ep >0$,
\begin{align*}
\liminf_{n\ra\infty} \frac{1}{n^2}\log \pp(T_{n,p}\ge tn^3) &\ge \liminf_{n\ra\infty} \frac{1}{n^2}\log \pp(G(n,p)\in U)\\
&\ge -\inf_{h\in U} I_p(h) \ge -\phi(p,t+\ep). 
\end{align*}
In Proposition \ref{properties} below, it is proved that $\phi$ is a continuous function of $t$ for every fixed $p$. This completes the proof of the first assertion of the theorem. The second assertion is merely a corollary of Theorem \ref{conditional}. The condition \eqref{inf0} required for Theorem~\ref{conditional} can be easily shown to follow from the continuity of $\phi$ in $t$, because any $\wf$ with $T(\wf) >t$ lies in the interior of the set $\{\wh: T(\wh)\ge t\}$. 
\end{proof}

\subsection{Properties of the rate function}
Given Theorem \ref{main}, there is a natural desire to understand the rate function~$\phi$. The following proposition summarizes some basic properties of $\phi$. The first property is required in the proof of Theorem \ref{triangle} above.
\begin{prop}\label{properties}
For each fixed $p\in (0,1)$, the following hold:
\begin{enumerate}[\textup{(}i\textup{)}]
\item The function $\phi(p,t)$ is continuous in $t$ in the interval $[0,1/6)$. 
\item As a function of $t$, $\phi(p,t) = 0$ in the interval $[0, p^3/6]$ and strictly increasing in $(p^3/6,1/6)$. Moreover, for $p^3/6< t< s< 1/6$, 
\[
\phi(p,t)< (t/s)^{1/3} \phi(p,s).
\] 
\item For $t\in (p^3/6, 1/6)$, $\phi(p,t)$ can be alternately represented as
\[
\phi(p,t) := \inf\{I_p(f) : f\in \mm,\;T(f) = t\}. 
\]
Moreover, if $\{f_n\}_{n\ge 1}$ is a sequence in $\mm$ such that $T(f_n)\ge t$ for all $n$ and $I_p(f_n) \ra \phi(p,t)$, then $T(f_n)\ra t$. In particular, the elements of $\WF^*_{p,t}$ all satisfy $T(f)=t$. 
\end{enumerate}
\end{prop}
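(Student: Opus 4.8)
The plan is to establish the three parts in the order (ii), (iii), (i), since the monotonicity and the ``blow-up'' inequality of (ii) are the engine for (iii), and (iii) in turn simplifies the continuity argument in (i). Throughout, the two standing facts are that $I_p$ is lower semicontinuous on the compact space $\mmm$ (Lemma~\ref{lower}, Theorem~\ref{compact}) and that $T$ is continuous there, so that all infima are attained and all sublevel sets $\{T\ge t\}$ and $\{T = t\}$ are closed.

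For (ii): the scalar $I_p$ is half a Bernoulli relative entropy, hence $\ge 0$ with equality only at $p$, so $I_p(f)\ge0$ with equality iff $f\equiv p$; and $\phi(p,\cdot)$ is non-decreasing since raising $t$ shrinks the feasible set. The test function $f\equiv p$, which has $T(f)=p^3/6$, gives $\phi(p,t)=0$ on $[0,p^3/6]$. If $t>p^3/6$ and $\phi(p,t)=0$, a minimizer would have $I_p=0$, hence equal $p$, hence $T=p^3/6<t$ --- impossible; so $\phi(p,t)>0$ on $(p^3/6,1/6)$. For the quantitative bound, given $p^3/6<t<s<1/6$ and a minimizer $f^*$ for $\phi(p,s)$, put $\theta=(t/s)^{1/3}$ and let $g\in\mm$ equal $f^*(x/\theta,y/\theta)$ on $[0,\theta]^2$ and $p$ elsewhere; a change of variables gives $I_p(g)=\theta^2\phi(p,s)$ (the region where $g\equiv p$ contributes nothing), while discarding nonnegative cross-terms gives $T(g)\ge\theta^3 T(f^*)\ge t$. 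Hence $\phi(p,t)\le(t/s)^{2/3}\phi(p,s)$, which is $<(t/s)^{1/3}\phi(p,s)<\phi(p,s)$ because $\phi(p,s)>0$ and $0<t/s<1$; this yields both the stated inequality and strict monotonicity on $(p^3/6,1/6)$.

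For (iii): $\inf\{I_p(f):T(f)=t\}\ge\phi(p,t)$ is immediate. Conversely, let $f^*$ attain $\phi(p,t)$ over $\{T\ge t\}$; if $T(f^*)=t'>t$, the same blow-up with $\theta=(t/t')^{1/3}<1$ gives $g$ with $T(g)\ge t$ and $I_p(g)=\theta^2\phi(p,t)<\phi(p,t)$, contradicting feasibility of $g$. So $T(f^*)=t$, which gives the alternate representation and shows every element of $\WF_{p,t}^*$ has $T=t$. For the last claim, suppose $T(f_n)\ge t$, $I_p(f_n)\to\phi(p,t)$, but along a subsequence $T(f_n)\to t''>t$; by compactness pass to a further subsequence with $f_n\to f_\infty$ in $\mmm$, so $T(f_\infty)=t''$ and $I_p(f_\infty)\le\liminf I_p(f_n)=\phi(p,t)$. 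If $t''<1/6$, strict monotonicity gives $\phi(p,t)<\phi(p,t'')\le I_p(f_\infty)\le\phi(p,t)$, a contradiction; if $t''=1/6$ then $f_\infty\equiv1$ and $I_p(f_\infty)=I_p(1)>I_p((6t)^{1/3})\ge\phi(p,t)$, since $(6t)^{1/3}>p$ (as $t>p^3/6$) and the scalar $I_p$ is strictly increasing on $(p,1)$ --- again a contradiction. Hence $T(f_n)\to t$.

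For (i): $\phi(p,\cdot)\equiv0$ on $[0,p^3/6]$ is continuous, so fix $t_0\in(p^3/6,1/6)$; being monotone, $\phi(p,\cdot)$ is continuous at $t_0$ iff it has no jump there. Lower semicontinuity at $t_0$ follows by the compactness argument used above: minimizers $f_n$ for $t_n\to t_0$ subconverge to some $f_\infty$ with $T(f_\infty)\ge t_0$ and $I_p(f_\infty)\le\liminf\phi(p,t_n)$, so $\phi(p,t_0)\le\liminf\phi(p,t_n)$; combined with monotonicity this already gives left-continuity. For right-continuity, take a minimizer $f^*$ of $\phi(p,t_0)$ (so $T(f^*)=t_0$ by (iii)) and follow the path $g_\lambda:=(1-\lambda)f^*+\lambda$, $\lambda\in[0,1]$: then $g_\lambda\ge f^*$ gives $T(g_\lambda)\ge t_0$; $\lambda\mapsto T(g_\lambda)$ is a polynomial with strictly positive derivative on $(0,1)$ (the integrand $(1-f^*)\,g_\lambda\,g_\lambda$ is positive on a set of positive measure since $f^*\not\equiv1$), increasing from $t_0$ up to $T(1)=1/6$; and $I_p(g_\lambda)\to I_p(f^*)=\phi(p,t_0)$ as $\lambda\to0$ by bounded convergence. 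Solving $T(g_{\lambda(t)})=t$ for $t$ just above $t_0$ yields $\lambda(t)\to0$ and hence $\limsup_{t\downarrow t_0}\phi(p,t)\le\phi(p,t_0)$, finishing continuity on $[0,1/6)$. I expect this last perturbation --- raising the triangle density at a vanishing entropy cost --- to be the main obstacle, since lower semicontinuity and left-continuity come for free from compactness and monotonicity; the blow-up device driving (ii) and (iii) is the other essential ingredient but is routine to verify once set up.
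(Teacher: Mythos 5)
Your proposal is correct, and it differs from the paper's argument in a few instructive ways. For parts (ii) and (iii) the paper uses the \emph{convex combination} $f_\delta=(1-\delta)f+\delta p$, combined with the pointwise convexity of $I_p$ to get $I_p(f_\delta)\le(1-\delta)I_p(f)$ and $T(f_\delta)\ge(1-\delta)^3T(f)$; you instead use a ``blow-up'' rescaling of the minimizer into the corner $[0,\theta]^2$ with $p$ filled in outside. The rescaling gives the exact identity $I_p(g)=\theta^2 I_p(f^*)$ (no convexity needed) and $T(g)\ge\theta^3 T(f^*)$, hence $\phi(p,t)\le(t/s)^{2/3}\phi(p,s)$ --- a strictly sharper bound than the paper's $(t/s)^{1/3}\phi(p,s)$, from which the stated inequality follows immediately since $\phi(p,s)>0$. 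For $\phi\equiv0$ on $[0,p^3/6]$ you argue directly with the test function $f\equiv p$, which is cleaner than the paper's appeal to Theorem \ref{triangle} (a forward reference, since that theorem itself invokes part (i) of this proposition). For left-continuity and for the last claim of (iii), you lean on the compactness of $\mmm$ to extract convergent subsequences, whereas the paper gets by with the elementary observation that $I_p$ is uniformly continuous on $[0,1]$ so $\sup_f|I_p(f^\delta)-I_p(f)|\to0$, plus the algebraic bound $T(f^{(n)})\le t/(1-\delta_n)^3$; both routes are legitimate, and yours is shorter at the cost of invoking Theorem \ref{compact} once more. Right-continuity uses essentially the same path $(1-\lambda)f^*+\lambda$ in both proofs (the paper writes it as $f+\delta(1-f)$ and uses the inequality $T(f^\delta)\ge T(f)(1-\delta^3)+\delta^3/6$ rather than differentiating in $\lambda$).

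One small gap worth closing: in part (i) you dispose of $[0,p^3/6]$ by noting $\phi\equiv 0$ there and then ``fix $t_0\in(p^3/6,1/6)$,'' but this leaves \emph{right}-continuity at the boundary point $t_0=p^3/6$ unaddressed; continuity of the zero function on the closed interval only gives left-continuity at that endpoint. Your perturbation argument covers it verbatim, though: take $f^*\equiv p$ (so $T(f^*)=p^3/6$ and $I_p(f^*)=0$) and run the same path $g_\lambda=(1-\lambda)p+\lambda$. A one-line remark to that effect would make the argument airtight.
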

\begin{proof}
Let us start by proving that $\phi$ is continuous in $t$. 
For each $f\in\mm$ and $\delta \in [0,1]$, let 
\[
f^\delta := f + \delta(1-f).
\]
By the inequality
\begin{align*}
&(a+\delta(1-a))(b+\delta(1-b))(c+ \delta(1-c)) \\
&= abc + \delta((1-a)bc + (1-b)ac+(1-c)ab)  \\
&\qquad + \delta^2((1-a)(1-b)c + (1-a)(1-c)b + (1-b)(1-c)a) \\
&\qquad + \delta^3(1-a)(1-b)(1-c) \\
&\ge abc + \delta^3((1-a)bc + (1-b)ac+(1-c)ab)  \\
&\qquad + \delta^3((1-a)(1-b)c + (1-a)(1-c)b + (1-b)(1-c)a) \\
&\qquad + \delta^3(1-a)(1-b)(1-c) \\
&= abc + \delta^3(1-abc),
\end{align*}
we see that 
\begin{equation}\label{tineq}
T(f^\delta) \ge T(f)(1-\delta^3) + \frac{\delta^3}{6}. 
\end{equation}
Take any $t\in [0,1/6)$ and any $f$ such that $T(f) \ge t$. Suppose $t_n \downarrow t$. Let $\delta_n$ be the smallest number in $[0,1]$ such that $T(f^{\delta_n}) \ge t_n$. By \eqref{tineq} it follows that $\delta_n$ exists and $\lim_{n\ra \infty} \delta_n = 0$. Therefore by the dominated convergence theorem, $\lim_{n\ra \infty} I_p(f^{\delta_n}) = I_p(f)$. Thus,
\begin{align*}
\lim_{n\ra \infty} \phi(p,t_n) \le \lim_{n\ra \infty} I_p(f^{\delta_n}) = I_p(f). 
\end{align*}
Since this is true for every $f$ such that $T(f) \ge t$ and $\phi$ is a non-decreasing function in $t$, this proves the right continuity of $\phi$. 

Next, take a sequence $t_n \uparrow t$. Let $f_n$ be a sequence of functions such that $T(f_n) \ge t_n$ and 
\[
\lim_{n\ra \infty} I_p(f_n) = \lim_{n\ra\infty} \phi(p,t_n).
\]
For each $n$, let $\delta_n$ be the smallest number in $(0,1)$ such that $T(f_n^{\delta_n}) \ge t$. By~\eqref{tineq}, $\delta_n$ exists and $\lim_{n\ra \infty}\delta_n = 0$. Now, the function $I_p$ on $[0,1]$ (defined in \eqref{ipdef1}) is uniformly continuous on $[0,1]$. As a consequence, 
\[
\lim_{\delta \ra 0} \sup_{f\in \mm} |I_p(f^\delta) - I_p(f)| = 0.
\]
In particular, 
\[
\lim_{n\ra \infty} I_p(f_n^{\delta_n}) = \lim_{n\ra \infty} \phi(p,t_n). 
\]
But $\phi(p,t) \le I_p(f^{\delta_n}_n)$ for each $n$. By the monotonicity of $\phi$, this proves left continuity.

Next, note that since $\pp(T(G_{n,p}) \ge tn^3) \ra 1$ for any $t< p^3/6$, Theorem~\ref{triangle} and the continuity of $\phi$ imply that $\phi(p,t) = 0$ for $t\le p^3/6$. Let us now show that $\phi(p,t)$ is strictly increasing in $t$ when $t\in (p^3/6, 1/6)$. 


Fix $p^3/6\le t < s < 1/6$. Fix $\ep > 0$. Take any $f\in \mm$ such that $T(f) \ge s$. For each $\delta \in (0,1)$, let 
\begin{equation}\label{fdelta}
f_\delta (x,y) := (1-\delta) f(x,y) + \delta p. 
\end{equation}
By the inequality
\begin{align*}
((1-\delta)a+\delta p)((1-\delta) b+\delta p)((1-\delta)c+ \delta p ) \ge (1-\delta)^3 abc
\end{align*}
we have 
\begin{equation*}
T(f_{\delta})\ge (1-\delta)^3 T(f)\ge (1-\delta)^3 s. 
\end{equation*}
Thus, if we take $\delta$ such that $(1-\delta)^3 = t/s$, then  $T(f_\delta) \ge t$. By the convexity of the function $I_p$ on $[0,1]$ defined in \eqref{ipdef1}, we see that for any $x\ge p$,
\[
I_p((1-\delta) x+ \delta p) \le (1-\delta) I_p(x)+ \delta I_p(p)= (1-\delta) I_p(x),
\]
and therefore,
\begin{equation}\label{tfineq}
I_p(f_\delta) \le (1-\delta) I_p(f). 
\end{equation}
Thus, $\phi(p,t) \le I_p(f_\delta) \le (t/s)^{1/3}I_p(f)$.
Since this holds for any $f$ with $T(f)\ge s$, it shows that 
\begin{equation}\label{phist}
\phi(p, t)\le (t/s)^{1/3} \phi(p,s).
\end{equation} 
To show $\phi$ is a strictly increasing function of $t$ in the interval $(p^3/6, 1/6)$, it therefore suffices to prove that $\phi(p,t) > 0$ for $t$ in this interval. This follows easily, since the strict convexity of $I_p$ on $[0,1]$ and equation \eqref{tfineq} show that equality in \eqref{phist} can hold only if $f\equiv p$ almost everywhere for some $f$ such that $T(f)\ge s$, which is impossible since $s> p^3/6$. 

Next, fix $t\in (p^3/6, 1/6)$ and take any sequence $\{f^{(n)}\}$ in $\mm$ such that $T(f^{(n)})\ge t$ for all $n$ and $I_p(f^{(n)})\ra\phi(p,t)$. Recall the subscript notation introduced in \eqref{fdelta} above. By the continuous mapping theorem, there exist $\delta_n\in [0,1]$ such that for each $n$,
\[
T(f_{\delta_n}^{(n)}) = t. 
\]
Therefore by \eqref{tfineq},
\[
\phi(p,t) \le I_p(f^{(n)}_{\delta_n}) \le (1-\delta_n)I_p(f^{(n)}), 
\]
which proves that $\delta_n \ra 0$. This proves that
\[
\phi(p,t) = \inf\{I_p(f): f\in \mm, \; T(f) = t\}. 
\]
Since $\delta_n \ra 0$, this also proves that $T(f^{(n)}) \ra t$. 
This completes the proof of Proposition \ref{properties}.
\end{proof}

\subsection{The `Replica Symmetric' phase}
Note that there are two ``extreme'' functions that satisfy $T(f) = t$. First, there is the constant function 
\begin{equation}\label{ct}
c_t(x,y) \equiv (6t)^{1/3}.
\end{equation}
On the opposite extreme, there is the function $\chi_t$, defined as 
\begin{equation}\label{chit}
\chi_t(x,y) := 
\begin{cases}
1 &\text{ if } \max\{x,y\}\le (6t)^{1/3},\\
0 &\text{ otherwise.}
\end{cases}
\end{equation}
In a limiting sense, $c_t$ represents an Erd\H{o}s-R\'enyi random graph with edge probability $(6t)^{1/3}$, while $\chi_t$ represents the union of a clique of size $n(6t)^{1/3}$ and a set of isolated vertices of size $n(1-(6t)^{1/3})$. 

It is simple to see that \eqref{cd} holds if and only if the infimum in \eqref{phidef} is attained at the constant function $c_t$. The following theorem gives a sufficient condition for this to happen. This extends the main result of \cite{chatterjeedey09}. The methods of \cite{chatterjeedey09} are closely related to methods from statistical physics; drawing inspiration from this connection, one may call the region where $c_t$ solves the variational problem \eqref{phidef} as the `replica symmetric phase' of the problem. 
\begin{thm}\label{symm}
For each $0< p< 1$ and $t\in (p^3/6, 1/6)$, let 
$h_p(t) := I_p((6t)^{1/3})$.
For $t\in [0,p^3/6]$, let $h_p(t)=0$. Let $\hat{h}_p$ be the convex minorant of $h_p$ (i.e.\ the maximum convex function lying below $h_p$). If $t$ is a point in $(p^3/6, 1/6)$ where $h_p(t) = \hat{h}_p(t),$ then the variational problem \eqref{phidef} for this pair $(p,t)$ is uniquely solved by the constant function $c_t \equiv (6t)^{1/3}$. Consequently, $\phi(p,t) = h_p(t)$. Moreover, for such $(p,t)$, for each $\ep >0$
\[
\lim_{n\ra\infty}\pp(\delta_\Box(G(n,p), c_t) \ge\ep \mid T_{n,p}\ge tn^3) = 0.
\]
Since $c_t$ is the limit of $G(n,(6t)^{1/3})$, this means that for such $(p,t)$ the conditional distribution of $G(n,p)$ given $T_{n,p}\ge tn^3$ is indistinguishable from the law of $G(n, (6t)^{1/3})$ in the large $n$ limit.
\end{thm}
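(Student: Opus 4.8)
The upper bound $\phi(p,t)\le I_p(c_t)=h_p(t)$ is immediate because $T(c_t)=t$, so the plan is to prove the matching lower bound $\phi(p,t)\ge h_p(t)$, to read off the uniqueness of the minimizer from the equality case, and then to deduce the conditional-concentration statement from the second assertion of Theorem~\ref{triangle}.

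First I would pass to a one-dimensional picture. Since $t\in(p^3/6,1/6)$ is interior and $\hat h_p$ is convex, it has a supporting line $\ell(s)=A+cs$ at $s=t$; because $h_p(t)=\hat h_p(t)$ and $\ell\le\hat h_p\le h_p$, we get $\ell(s)\le h_p(s)$ on $[0,1/6)$ with equality at $s=t$. Here $c>0$: if $c=0$ then $t$ minimizes $\hat h_p$, forcing $\hat h_p(t)=\hat h_p(0)=0$ (note $0\le\hat h_p\le h_p$ and $h_p(0)=0$), hence $h_p(t)=0$, i.e.\ $(6t)^{1/3}=p$ and $t=p^3/6$, contradicting $t>p^3/6$. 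Using $h_p(s)=I_p((6s)^{1/3})$ for $s\ge p^3/6$, $h_p\equiv0$ below, $I_p\ge0$, and $I_p(p)=0$, a short computation gives
\[
\inf_{u\in[0,1]}\Bigl(I_p(u)-\tfrac{c}{6}u^3\Bigr)=A,
\]
attained at $u=(6t)^{1/3}$; equivalently, the pointwise bound $I_p(u)\ge A+\tfrac{c}{6}u^3$ holds for every $u\in[0,1]$.

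The heart of the argument is to lift this pointwise bound to graphons. Integrating $I_p(u)\ge A+\tfrac c6u^3$ against $f$ gives
\[
I_p(f)=\iint I_p(f(x,y))\,dx\,dy\ \ge\ A+\frac{c}{6}\iint f(x,y)^3\,dx\,dy.
\]
Next, applying the elementary inequality $f(x,y)f(y,z)f(z,x)\le\tfrac13\bigl(f(x,y)^3+f(y,z)^3+f(z,x)^3\bigr)$ (AM--GM), integrating over $[0,1]^3$, and using $\int_0^1dz=1$ so that each term on the right integrates to $\iint f^3$, one gets $6T(f)\le\iint f^3$. Hence, since $c>0$, for any $f\in\mm$ with $T(f)\ge t$,
\[
I_p(f)\ \ge\ A+\frac{c}{6}\iint f^3\ \ge\ A+cT(f)\ \ge\ A+ct\ =\ h_p(t),
\]
so $\phi(p,t)\ge h_p(t)$, and therefore $\phi(p,t)=h_p(t)$ with $c_t$ a minimizer. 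If $f$ is any minimizer then every inequality above is an equality: equality in AM--GM forces $f(x,y)=f(y,z)=f(z,x)$ for a.e.\ $(x,y,z)$, which forces $f$ to equal a constant $c_0$ a.e., while $cT(f)=ct$ forces $T(f)=t$, i.e.\ $c_0=(6t)^{1/3}$. Thus the minimizer is $c_t$ and is unique; $F^*_{p,t}=\{c_t\}$ and its image $\WF^*_{p,t}$ is the single (hence non-empty and compact) point $\widetilde{c}_t$.

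Finally, since $\WF^*_{p,t}=\{\widetilde{c}_t\}$, the bound $\pp(\delta_\Box(G(n,p),c_t)\ge\ep\mid T_{n,p}\ge tn^3)\le e^{-C(\ep,p,t)n^2}$ is exactly the second assertion of Theorem~\ref{triangle} (applicable since $t\in(p^3/6,1/6)$), and letting $n\ra\infty$ yields the stated limit; the last sentence of the theorem then follows because the constant graphon $c_t\equiv(6t)^{1/3}$ is the graph limit of $G(n,(6t)^{1/3})$. The only genuinely non-routine point is the passage from the one-dimensional to the graphon setting --- recognizing that the convex-minorant hypothesis is precisely what makes $I_p(u)\ge A+\tfrac c6u^3$ a valid \emph{pointwise} inequality, and that its cubic term is controlled by the triangle density via $6T(f)\le\iint f^3$ (AM--GM together with the normalization $|[0,1]|=1$); everything else is elementary manipulation of convex functions and of the equality cases.
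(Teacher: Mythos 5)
Your proof is correct and follows essentially the same route as the paper's: the paper expresses the convex-minorant hypothesis as $t=\argmax_x(\beta x - h_p(x))$ rather than via an explicit supporting line, and invokes H\"older in place of your AM--GM step, but these are cosmetically different ways of obtaining the same pointwise bound $I_p(u)\ge A+\tfrac{c}{6}u^3$ and the same inequality $6T(f)\le\iint f^3$; the uniqueness argument from the equality case and the appeal to Theorem~\ref{triangle}/\ref{conditional} for the conditional statement also match.
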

\begin{proof}
Since $h_p$ is an increasing function, it is easy to see that $h_p(t) = \hat{h}_p(t)$ for $t\le p^3/6$. (Incidentally, this also shows that $\hat{h}_p$ is an increasing function in $[0,1/6)$, and strictly increasing in $(p^3/6, 1/6)$.) 

Suppose $t$ is a point in $(p^3/6, 1/6)$ such that $h_p(t) = \hat{h}_p(t)$. We claim that there exists $\beta > 0$ such that 
\begin{equation}\label{betadef}
t = \argmax_{x\in [0,1/6]} (\beta x - h_p(x)). 
\end{equation}
To see this, observe that since $\hat{h}_p$ is convex and strictly increasing in the interval $(p^3/6,1/6)$, there exists $\beta> 0$ and $c\in \rr$ such that the line $y = \beta x + c$ lies below the curve $y = \hat{h}_p(x)$ and touches it at $x=t$. But we also know that $h_p$ lies above $\hat{h}_p$ and the two curves touch at $t$. Thus,
\[
t = \argmax_x (\beta x + c - h_p(x)) = \argmax_x (\beta x - h_p(x)).
\]
This proves the claim. 

Now take any $f\in \mm$ such that $T(f)\ge t$. Let $c_t$ be the function that is identically equal to $(6t)^{1/3}$. Then $T(c_t)= t$, and by H\"older's inequality and~\eqref{betadef},
\begin{align*}
&\beta t - I_p(c_t) = \iint \biggl(\frac{\beta c_t(x,y)^3}{6} - I_p(c_t(x,y))\biggr) \; dx\; dy\\
&\ge \iint \biggl(\frac{\beta f(x,y)^3}{6} - I_p(f(x,y))\biggr) \; dx\; dy\\
&\ge \frac{\beta}{6}\iiint f(x,y) f(y,z)f(z,x) \;dx\;dy\;dz - \iint I_p(f(x,y))\;dx\;dy\\
&\ge \beta t - I_p(f). 
\end{align*}
Thus, $c_t$ minimizes $I_p(f)$ among all $f$ such that $T(f)\ge t$. This shows that
\[
\phi(p,t) = I_p(c_t) = h_p(t). 
\]
The uniqueness of the optimizer follows from the H\"older step in the above deduction. Finally, the claim about the conditional distribution follows from Theorem \ref{conditional} and the uniqueness of the minimizer.  
\end{proof}
It is easy to show that for any $p > 0$, $h_p(t)=\hat{h}_p(t)$ for all $t\in (p^3/6, t') \cup(t'', 1/6)$ where $t'$ and $t''$ depend on $p$. Similarly, given any $t\in (0,1/6)$, there exists $p'< (6t)^{1/3}$ depending on $t$ such that for all $p\in (p', (6t)^{1/3})$, $h_p(t) = \hat{h}_p(t)$. Thus, there is a nontrivial set of $(p,t)$ where $c_t$ solves the variational problem and consequently $\phi(p,t) = h_p(t)$. As mentioned before, this recovers the main result of~\cite{chatterjeedey09}. The conclusion about the conditional distribution is a new result. 

\subsection{Replica Symmetry Breaking} 
Given Theorem \ref{symm}, it is quite interesting to note that the variational problem \eqref{phidef} is not solved by constant functions  everywhere. From the physical point of view espoused in \cite{chatterjeedey09}, however, this is not surprising; it is simply the effect of replica symmetry breaking down in the `low temperature regime'.

The phase transition is very easy to establish using Theorem \ref{triangle}, by comparing the performances of $c_t$ and $\chi_t$ defined in \eqref{ct} and \eqref{chit}. A simple computation shows that for any $t\in (0,1/6)$, 
\begin{align}\label{nonopt}
\lim_{p\ra 0} \frac{I_p(c_t)}{\log (1/p)} &= \frac{(6t)^{1/3}}{2}> \frac{(6t)^{2/3}}{2} = \lim_{p\ra 0} \frac{I_p(\chi_t)}{\log (1/p)}. 
\end{align}
Combining the above observation with Theorem \ref{conditional}, it follows easily that there are values of $(p,t)$ such that given $T_{n,p}\ge tn^3$, the graph $G(n,p)$ must look different than an Erd\H{o}s-R\'enyi graph. Again, compactness is crucial.  This is formalized by the following theorem. 
\begin{thm}\label{breaking}
Let ${\widetilde C}$ denote the set of constant functions in $\mmm$. For each $t\in (0,1/6)$, there exists $p'>0$ such that for all $p< p'$, the variational problem \eqref{phidef} is not solved by the constant function $c_t\equiv (6t)^{1/3}$. Moreover, for such $(p,t)$, $\delta_\Box(\WF^*_{p,t}, {\widetilde C}) > 0$, where $\WF^*_{p,t}$ is the set of minimizers defined in Theorem \ref{triangle}. Consequently, there exists $\ep >0$ such that 
$$\lim_{n\ra\infty} \pp(\delta_\Box(G(n,p), {\widetilde C}) > \ep\mid T_{n,p}\ge tn^3) = 1.$$ 
\end{thm}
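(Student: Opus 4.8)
The plan is to combine the asymptotic comparison \eqref{nonopt} with the continuity and compactness structure already established, using $\chi_t$ as an explicit competitor that beats $c_t$ for small $p$. First I would fix $t\in(0,1/6)$ and use \eqref{nonopt}: since $(6t)^{1/3}>(6t)^{2/3}$ for $t<1/6$, there is a gap in the leading-order behavior as $p\to 0$, so there exists $p'>0$ such that for all $p<p'$ we have $I_p(\chi_t)<I_p(c_t)$. Because $T(\chi_t)=t$, this immediately gives $\phi(p,t)\le I_p(\chi_t)<I_p(c_t)$, so $c_t$ does not solve the variational problem \eqref{phidef} for such $(p,t)$. This establishes the first assertion.

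Next I would show $\delta_\Box(\WF^*_{p,t},{\widetilde C})>0$ for such $(p,t)$. Suppose not; then there is a sequence of minimizers $\wf^{(k)}\in\WF^*_{p,t}$ with $\delta_\Box(\wf^{(k)},{\widetilde C})\to 0$. Since ${\widetilde C}$ (the image in $\mmm$ of the constant functions $\{c:c\in[0,1]\}$) is compact — it is the continuous image of $[0,1]$ — and $\WF^*_{p,t}$ is compact by Theorem \ref{triangle}, passing to a subsequence we may assume $\wf^{(k)}\to \wf^*$ in $\delta_\Box$ with $\wf^*\in{\widetilde C}$, say $\wf^*$ is represented by the constant $c$. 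By lower semicontinuity of $I_p$ (Lemma \ref{lower}), $I_p(\wf^*)\le\liminf_k I_p(\wf^{(k)})=\phi(p,t)$; and since $T$ is $\delta_\Box$-continuous, $T(\wf^*)=\lim_k T(\wf^{(k)})=t$ by Proposition \ref{properties}(iii), forcing $c=(6t)^{1/3}$, i.e.\ $\wf^*$ is the orbit of $c_t$. Then $\phi(p,t)\le I_p(c_t)$, contradicting $\phi(p,t)\le I_p(\chi_t)<I_p(c_t)$. Hence $\delta_\Box(\WF^*_{p,t},{\widetilde C})=:2\ep>0$ for some $\ep>0$.

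Finally I would transfer this separation to the conditional law of $G(n,p)$. The set $\WF := \{\wh\in\mmm : T(\wh)\ge t\}$ is closed (preimage of a closed set under the continuous map $T$), and by Proposition \ref{properties}(ii) we have $\phi(p,t)>0$ and, as noted in the proof of Theorem \ref{triangle}, condition \eqref{inf0} holds by continuity of $\phi$ in $t$ (any $\wf$ with $T(\wf)>t$ lies in the interior). So Theorem \ref{conditional} applies with this $\WF$ and gives, for the $\ep$ above,
\[
\pp(\delta_\Box(G(n,p),\WF^*_{p,t})\ge\ep \mid T_{n,p}\ge tn^3)\le e^{-C(\ep,p,t)n^2}\to 0.
\]
On the complementary event $\delta_\Box(G(n,p),\WF^*_{p,t})<\ep$, the triangle inequality together with $\delta_\Box(\WF^*_{p,t},{\widetilde C})\ge 2\ep$ gives $\delta_\Box(G(n,p),{\widetilde C})>\ep$. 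Combining, $\pp(\delta_\Box(G(n,p),{\widetilde C})>\ep \mid T_{n,p}\ge tn^3)\to 1$, which is the claim.

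The main obstacle is the middle step: ruling out that minimizers approach a constant. This requires knowing that any $\delta_\Box$-limit of minimizers is again a minimizer with $T$-value exactly $t$ — which rests on joint use of lower semicontinuity of $I_p$, $\delta_\Box$-continuity of $T$, and Proposition \ref{properties}(iii) — and on compactness of ${\widetilde C}$ inside $\mmm$. One should also double-check the elementary computation \eqref{nonopt}, i.e.\ that $I_p(c_t)/\log(1/p)\to (6t)^{1/3}/2$ and $I_p(\chi_t)/\log(1/p)\to (6t)^{2/3}/2$, since the whole phase transition hinges on the strict inequality between these two limits.
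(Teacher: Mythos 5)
Your proposal is correct and follows essentially the same approach as the paper: use \eqref{nonopt} to rule out $c_t$, use Proposition \ref{properties}(iii) to show no constant can be a minimizer, and use compactness of both $\WF^*_{p,t}$ and ${\widetilde C}$ to get positive distance, finishing with Theorem \ref{triangle}. The paper states the middle step more directly as ``disjoint compact sets have positive distance,'' whereas you prove it via a sequence/contradiction argument (with a small redundancy: once $\wf^*\in\WF^*_{p,t}$ by closedness, you do not also need lower semicontinuity of $I_p$ to conclude it is a minimizer), but the substance is identical.
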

\begin{proof}
By \eqref{nonopt} we see that for each $t$, there exists $p' >0$ such that $c_t$ is not a minimizer for the problem \eqref{phidef} if $p < p'$. Take any such $(p,t)$. By Theorem~\ref{triangle}, $\WF_{p,t}^*$ is non-empty and compact. By part $(iii)$ of Proposition~\ref{properties} and the non-optimality of $c_t$, it follows that ${\widetilde C}$ and $\WF_{p,t}^*$ must be disjoint. But ${\widetilde C}$ and $\WF_{p,t}^*$ are both compact subsets of $\mmm$. Therefore, $\delta_\Box({\widetilde C}, \WF_{p,t}^*) >0$. The last claim follows by Theorem \ref{triangle}. 
\end{proof}

\subsection{The double phase transition}
A combination of Theorem \ref{breaking} and Theorem \ref{symm} shows that for all small enough $p$, the variational problem \eqref{phidef} has a `double phase transition'. (Actually, there may be more than two phase transitions, but we show that there is at least two.) 

Indeed, for all small enough $p$, Theorem \ref{symm} (or the results of \cite{chatterjeedey09}) show that there exists $p^3/6 < t'\le t''< 1/6$ such that $\phi(p,t) = I_p((6t)^{1/3})$ for all $t\in (p^3/6, t') \cup (t'', 1/6)$. On the other hand by Theorem \ref{breaking}, it follows that for all small enough $p$, the variational problem \eqref{phidef} is not solved by a constant function at the point $(p, 1/2)$. Combining these two observations gives the following theorem.
\begin{thm}
There exists $p_0 >0$ such that if $p\le p_0$, then there exists $p^3/6< t'< t''<1/6$ such that the variational problem \eqref{phidef} is solved by the  constant function $c_t \equiv (6t)^{1/3}$ when $t\in (p^3/6, t')\cup (t'', 1/6)$, but there is a non-empty subset of $(t', t'')$ where all optimizers are non-constant.
\end{thm}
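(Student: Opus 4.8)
The plan is to deduce this directly from Theorems~\ref{symm} and~\ref{breaking} together with the continuity of $\phi(p,\cdot)$ supplied by Proposition~\ref{properties}(i). Heuristically: Theorem~\ref{symm} (and the remark following it) gives a ``replica symmetric'' region in which $c_t$ is optimal near each of the two endpoints $t=p^3/6$ and $t=1/6$; Theorem~\ref{breaking} produces, for all sufficiently small $p$, an interior point at which $c_t$ is \emph{not} optimal; and continuity of $\phi$ then lets me define $t'$ and $t''$ as the left and right ends of the set of ``bad'' $t$'s and check that they are strictly interior.

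First I would fix a reference point $t^*\in(0,1/6)$, say $t^*=1/12$, and apply Theorem~\ref{breaking} at $t^*$ to obtain $p_0>0$, shrunk so that also $p_0^3/6<t^*$, with the property that for every $p\le p_0$ the problem \eqref{phidef} at $(p,t^*)$ has no constant minimizer. Fix such a $p$ and set
\[
B := \{\, t\in(p^3/6,\,1/6) : c_t \text{ is not a minimizer for \eqref{phidef}} \,\}.
\]
Writing $h_p(t):=I_p((6t)^{1/3})$, the function $c_t$ is always feasible ($T(c_t)=t$) with $I_p(c_t)=h_p(t)$, so $\phi(p,t)\le h_p(t)$ always, with equality precisely when $c_t$ is a minimizer. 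Hence $B=\{t: h_p(t)-\phi(p,t)>0\}$ is the positivity set of a continuous function (using Proposition~\ref{properties}(i) and the explicit formula for $h_p$) and is therefore open in $(p^3/6,1/6)$; it is non-empty since $t^*\in B$.

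Next I would show $B$ avoids both endpoints. By the remark following Theorem~\ref{symm} together with Theorem~\ref{symm} itself, $c_t$ is the unique minimizer for all $t$ in some right neighborhood $(p^3/6,a)$ of $p^3/6$ and in some left neighborhood $(b,1/6)$ of $1/6$, so $B$ misses both; putting $t':=\inf B$ and $t'':=\sup B$ then gives $p^3/6<a\le t'\le t''\le b<1/6$. Since $B$ is open and $t',t''$ are interior points of $(p^3/6,1/6)$, neither of them lies in $B$, so $B\subseteq(t',t'')$; and because $B\neq\emptyset$ this forces $t'<t''$. For $t\in(p^3/6,t')$ we have $t\notin B$, so $c_t$ solves \eqref{phidef}, and likewise for $t\in(t'',1/6)$. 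Finally, $B$ is a non-empty subset of $(t',t'')$, and for $t\in B$ every minimizer $f$ satisfies $T(f)=t$ by Proposition~\ref{properties}(iii), so a constant minimizer would have to equal $c_t$; but $t\in B$ says $c_t$ is not a minimizer, a contradiction. Hence all optimizers are non-constant throughout $B$, and $t',t''$ have all the required properties.

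I do not anticipate a serious obstacle; the only point needing care is the bookkeeping that makes $t'$ and $t''$ genuine interior transition points --- non-emptiness of $B$ is exactly Theorem~\ref{breaking}, openness of $B$ is continuity of $\phi$, and separation of $B$ from $p^3/6$ and $1/6$ is Theorem~\ref{symm} plus the remark after it; everything else is immediate. (One could additionally record that $c_t$ is the \emph{unique} minimizer on $(p^3/6,t')$ near the left endpoint, but the statement only asks that it be \emph{a} minimizer there.)
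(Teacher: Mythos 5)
Your proof is correct and follows the same approach as the paper: combining Theorem~\ref{symm} (and the remark after it) to pin down replica-symmetric regions near both ends of $(p^3/6,1/6)$, and Theorem~\ref{breaking} to produce an interior $t$ where $c_t$ fails to be optimal. The only difference is that the paper states the combination in a single paragraph without explicitly defining $t'$ and $t''$ or checking $t'<t''$, whereas you define the ``bad set'' $B=\{t: h_p(t)>\phi(p,t)\}$, prove it is open (via Proposition~\ref{properties}(i)), nonempty (via Theorem~\ref{breaking} at a fixed $t^*$), and bounded away from the endpoints (via Theorem~\ref{symm} and the remark), then set $t'=\inf B$, $t''=\sup B$; your appeal to Proposition~\ref{properties}(iii) to rule out \emph{any} constant minimizer for $t\in B$ is also spelled out a bit more carefully than in the paper's terse exposition. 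This is sound and fills the bookkeeping that the paper leaves implicit.
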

Of course, as shown by Theorems \ref{symm} and \ref{breaking}, the significance of optimizers being constant or non-constant is in whether the conditional behavior of $G(n,p)$ given $T_{n,p}\ge tn^3$ is close to that of an Erd\H{o}s-R\'enyi graph or not. 

\subsection{The small $p$ limit}
The last theorem of this paper describes the nature of $\phi(p,t)$ and $\WF^*_{p,t}$ when $t$ is fixed and $p$ is very small, tending to zero. The essence of the result, perhaps not surprisingly, is that when $t$ is fixed and $p\ra 0$, then conditionally on the event $\{T_{n,p}\ge tn^3\}$ the graph $G(n,p)$ must look like a clique.
\begin{thm}\label{clique}
For each $t\in (0,1/6)$, 
\[
\lim_{p\ra 0} \frac{\phi(p,t)}{\log (1/p)} = \frac{(6t)^{2/3}}{2}. 
\]
Moreover, if $\chi_t$ is the function defined in \eqref{chit} and $\WF^*_{p,t}$ is defined as in Theorem \ref{triangle}, then the set $\WF^*_{p,t}$ converges to the point ${\widetilde \chi}_t$ as $p\ra0$, in the sense that 
\[
\lim_{p\ra 0} \sup_{\wf\in\WF^*_{p,t}}\delta_\Box(\wf, {\widetilde\chi}_t)=0. 
\]
Consequently, for each $\ep >0$,
\[
\lim_{p\ra0} \lim_{n\ra\infty} \pp(\delta_\Box(G(n,p), {\widetilde\chi}_t) \ge \ep \mid T_{n,p}\ge tn^3) = 0. 
\]
\end{thm}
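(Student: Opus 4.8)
The plan is to reduce the statement to the variational description of $\phi(p,t)$ together with a graphon version of the Kruskal--Katona theorem and the compactness of $(\mmm,\delta_\Box)$. The ingredient I would establish first is the bound
\[
6\,T(f)=\iiint f(x,y)f(y,z)f(z,x)\,dx\,dy\,dz\ \le\ \Bigl(\iint f(x,y)\,dx\,dy\Bigr)^{3/2},
\]
valid for every $f\in\mm$, with equality precisely when $f$ is equivalent, under some measure preserving bijection, to an indicator $\mathbf{1}_{B\times B}$ of a ``clique''. Writing $d(x):=\int f(x,y)\,dy$ and $\rho:=\int d(x)\,dx$, one bounds $\int f(y,z)f(z,x)\,dz\le\sqrt{d(x)d(y)}$ by Cauchy--Schwarz and $f^2\le f$, and then $\int f(x,y)\sqrt{d(y)}\,dy\le\sqrt{d(x)\rho}$ by a second Cauchy--Schwarz and $f\le 1$; running the equality cases shows $f$ must be $\{0,1\}$-valued and $N(x):=\{y:f(x,y)=1\}$ must equal $B:=\{d>0\}$ for a.e.\ $x\in B$, so $f=\mathbf{1}_{B\times B}$ with $|B|=\rho^{1/2}$. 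In particular $T(f)\ge t$ forces $\iint f\ge(6t)^{2/3}$, and when $T(f)=t$ equality holds exactly for $f\sim\chi_t$ (then $|B|=(6t)^{1/3}$, and all sets of that measure are related by a measure preserving bijection).

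Granting this, the value of the limit is immediate. For the upper bound, $\chi_t$ is feasible in \eqref{phidef} since $T(\chi_t)=t$, and as $\chi_t$ equals $1$ on a set of measure $(6t)^{2/3}$ and $0$ elsewhere,
\[
\phi(p,t)\le I_p(\chi_t)=\tfrac12(6t)^{2/3}\log\tfrac1p+\tfrac12\bigl(1-(6t)^{2/3}\bigr)\log\tfrac1{1-p},
\]
so $\limsup_{p\to0}\phi(p,t)/\log(1/p)\le(6t)^{2/3}/2$. For the matching lower bound, any $f$ with $T(f)\ge t$ has $\rho:=\iint f\ge(6t)^{2/3}$ by the inequality above; convexity of $I_p$ on $[0,1]$ and Jensen's inequality give $I_p(f)\ge I_p(\rho)$, and since $I_p$ is increasing on $[p,1]$ while $p<(6t)^{2/3}$ once $p$ is small, $I_p(f)\ge I_p\bigl((6t)^{2/3}\bigr)$; dividing by $\log(1/p)$ and letting $p\to0$ gives $\liminf_{p\to0}\phi(p,t)/\log(1/p)\ge(6t)^{2/3}/2$.

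For the convergence of $\WF^*_{p,t}$ to $\widetilde\chi_t$ I would argue by contradiction via compactness. By part (iii) of Proposition~\ref{properties}, every point of $\WF^*_{p,t}$ is represented by some $f_p\in\mm$ with $T(f_p)=t$ and $I_p(f_p)=\phi(p,t)$. If the convergence failed there would be $\ep>0$, $p_k\to0$ and minimizers $f_{p_k}$ with $\delta_\Box(\widetilde f_{p_k},\widetilde\chi_t)\ge\ep$; by Theorem~\ref{compact} pass to a subsequence with $\widetilde f_{p_k}\to\widetilde g$ in $(\mmm,\delta_\Box)$. Since $T$ is $\delta_\Box$-continuous, $T(g)=t$; since $f\mapsto\iint f$ is $1$-Lipschitz on $(\mmm,\delta_\Box)$ (take $S=T=[0,1]$ in the cut norm), $\iint f_{p_k}\to\iint g$. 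Now $\iint f_{p_k}\ge(6t)^{2/3}$ by the key bound, while $I_{p_k}(f_{p_k})=\phi(p_k,t)\le I_{p_k}(\chi_t)$ together with Jensen gives $I_{p_k}(\iint f_{p_k})\le I_{p_k}(\chi_t)$, which after dividing by $\log(1/p_k)$ and passing to the limit forces $\iint g\le(6t)^{2/3}$. Hence $\iint g=(6t)^{2/3}$ and $T(g)=t=\tfrac16(\iint g)^{3/2}$ is the equality case, so $g\sim\chi_t$ and $\delta_\Box(\widetilde g,\widetilde\chi_t)=0$, a contradiction. The conditional statement then follows from Theorem~\ref{triangle}: fix $\ep>0$; for $p$ small enough $\sup_{\wf\in\WF^*_{p,t}}\delta_\Box(\wf,\widetilde\chi_t)<\ep/2$, so the event $\{\delta_\Box(G(n,p),\widetilde\chi_t)\ge\ep\}$ is contained in $\{\delta_\Box(G(n,p),\WF^*_{p,t})\ge\ep/2\}$, whose conditional probability given $T_{n,p}\ge tn^3$ is at most $e^{-C(\ep/2,p,t)n^2}\to0$ as $n\to\infty$; sending $p\to0$ afterwards yields the stated double limit.

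The step I expect to be the real obstacle is the equality (stability) analysis of the graphon Kruskal--Katona bound: chasing the two Cauchy--Schwarz equality cases all the way to the conclusion $f=\mathbf{1}_{B\times B}$ up to a measure preserving bijection, since the compactness argument for the minimizers rests entirely on it. The remaining points---that $T$ and the edge-density functional descend continuously to $(\mmm,\delta_\Box)$, and the elementary asymptotics of the explicit $I_p$---are routine bookkeeping.
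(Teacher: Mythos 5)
Your argument is correct, and it reaches the same first‐order asymptotics by what is in effect the same inequality: your two Cauchy--Schwarz steps are a repackaging of the paper's chain \eqref{f3}, namely $(6T(f))^2 \le \bigl(\iint h\bigr)^2\bigl(\iint f^2\bigr)\le\bigl(\iint f^2\bigr)^3\le\bigl(\iint f\bigr)^3$ with $h(x,y)=\int f(x,z)f(z,y)\,dz$, which is exactly the graphon Kruskal--Katona bound you state. Where you genuinely diverge from the paper is in establishing $\WF^*_{p,t}\ra\{\widetilde\chi_t\}$. The paper works \emph{quantitatively}: from \eqref{f3} it extracts $\iint f_p(1-f_p)\le C/\log(1/p)$ and $\iiiint\bigl(f_p(x,z)f_p(z',y)-f_p(x,z')f_p(z,y)\bigr)^2\le C/\log(1/p)$, then builds an explicit degree function $m_p$, approximates by a monotone step function $\ell_p$ via Lusin's theorem and a measure-preserving rearrangement $\sigma_p$, and shows $g_{p_i}\ra\ell\otimes\ell$ in $L^2$ along a subsequence, with $\ell$ ultimately forced to be an indicator of an interval. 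You instead work \emph{qualitatively}: you nail down the \emph{exact} equality case of the Kruskal--Katona bound (that $6T(g)=(\iint g)^{3/2}$ forces $g=\mathbf{1}_{B\times B}$ a.e.), then invoke the compactness of $(\mmm,\delta_\Box)$ from Theorem~\ref{compact} together with the $\delta_\Box$-continuity of $T$ and of $f\mapsto\iint f$ to pass to a limit $\widetilde g$ of any allegedly non-converging sequence of minimizers, pin down $\iint g=(6t)^{2/3}$ and $T(g)=t$ from the two-sided asymptotics (here you correctly appeal to Proposition~\ref{properties}(iii)), and conclude $\widetilde g=\widetilde\chi_t$ by the equality case. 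This is shorter and conceptually cleaner, but it shifts the burden entirely onto the equality analysis you flag at the end; that analysis is correct (the two Cauchy--Schwarz equalities plus $g^2=g$ force $g(x,\cdot)=\alpha(x)\sqrt{d(\cdot)}\in\{0,1\}$ for a.e.\ $x$ in the support, which after using symmetry forces $d$ constant on its support and $g=\mathbf{1}_{B\times B}$ with $|B|=\sqrt{\rho}$), and one also needs the standard fact that any two measurable subsets of $[0,1]$ of equal measure are related by a measure-preserving bijection mod 0, so that $\widetilde{\mathbf{1}_{B\times B}}=\widetilde\chi_t$ in $\mmm$. The paper's approach avoids the equality analysis at the price of more explicit bookkeeping; yours trades that bookkeeping for a rigidity lemma plus abstract compactness. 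Both are sound.
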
 
\begin{proof}
In this proof, $C$ will denote any constant that does not depend on anything else. The value of $C$ may change from line to line. All integrals will be over the interval $[0,1]$. 

Fix $t\in (0,1/6)$. For each $p< (6t)^{1/3}$, choose a function $f_p\in F^*_{p,t}$. From the definition of $I_p$, observe that if $p\le 1/2$, 
\begin{align}\label{ipfp}
\biggl|I_p(f_p) - \frac{1}{2}\iint f_p(x,y) \log \frac{1}{p} \,dx\,dy \biggr|&\le C.
\end{align}
On the other hand, by the definition of $\WF^*_{p,t}$, 
\begin{align*}
I_p(f_p) &\le I_p(\chi_t) \le \frac{(6t)^{2/3}}{2}\log \frac{1}{p}+ C. 
\end{align*}
Combining the last two inequalities gives
\begin{equation}\label{f2}
\iint f_p(x,y) \, dx\, dy \le (6t)^{2/3} + \frac{C}{\log(1/p)}. 
\end{equation}
Next, let 
\[
h_p(x,y) := \int f_p(x,z) f_p(z,y) \, dz. 
\]
Then by two applications of the Cauchy-Schwarz inequality, the inequality~\eqref{f2}, and the fact that $f_p(x,y)\in [0,1]$, we get the following important sequence of inequalities. 
\begin{equation}\label{f3}
\left\{
\begin{split}
(6t)^2 &= (6 T(f_p))^2 = \biggl(\iint h_p(x,y) f_p(x,y) \,dx\, dy\biggr)^2\\
&\le \iint h_p^2(x,y) \,dx \, dy \iint f_p^2(x,y) \, dx\, dy\\
&\le \int \biggl(\int f_p^2(x,z) \,dz \int f_p^2(y,z) \, dz\biggr) \, dx\, dy\iint f_p^2(x,y) \, dx \, dy\\
&= \biggl(\iint f_p^2(x,y) \, dx\, dy\biggr)^3 \\
&\le \biggl(\iint f_p(x,y) \, dx\, dy\biggr)^3\le \biggl((6t)^{2/3} + \frac{C}{\log (1/p)}\biggr)^{3}. 
\end{split}
\right.
\end{equation}
A direct consequence of \eqref{f3}, combined with \eqref{ipfp}, is that 
\[
\lim_{p\ra0} \frac{\phi(p,t)}{\log(1/p)} = \lim_{p\ra0} \frac{I_p(f_p)}{\log(1/p)} = \lim_{p\ra0} \frac{1}{2}\iint f_p(x,y) \, dx\, dy = \frac{(6t)^{2/3}}{2},
\]
which proves the first assertion of the theorem. A second important consequence of \eqref{f3}, to be useful later, is that 
\begin{equation}\label{f1f}
\iint f_p(x,y) (1-f_p(x,y)) \, dx \, dy \le \frac{C}{\log (1/p)}. 
\end{equation}
Next, note that for any $x,y$, 
\begin{align*}
&\frac{1}{2}\iint (f_p(x,z)f_p(z', y) - f_p(x, z')f_p(z, y))^2 \, dz\, dz'\\
&= \iint f_p^2(x,z) f_p^2(z', y)\, dz\,dz' - h_p^2(x,y)\\
&= \int f_p^2 (x,z) dz \int f_p^2(y,z) dz - h_p^2(x,y).  
\end{align*}
It follows from this and \eqref{f3} that 
\begin{align*}
&\iiiint (f_p(x,z)f_p(z', y) - f_p(x, z')f_p(z, y))^2 \, dz\, dz'\, dx\, dy\iint f_p^2(x,y) \, dx\, dy\\
&\le \frac{C}{\log (1/p)}. 
\end{align*}
The above inequality and the lower bound on $\iint f_p^2(x,y) dxdy$ from \eqref{f3} give
\begin{equation}\label{i4}
\begin{split}
&\iiiint (f_p(x,z)f_p (z', y) - f_p(x, z')f_p(z, y))^2 \, dz\, dz'\, dx\, dy \\
&\le \frac{C}{t^{2/3}\log (1/p)}. 
\end{split}
\end{equation}
Let $M_p := \iint f_p(x,y) dxdy$. 
For each $x$, let $m_p(x) := M_p^{-1/2}\int f_p(x,y) dy$.  An application of Jensen's inequality to \eqref{i4} gives
\begin{align*}
\iint (f_p(x,z)M_p - M_pm_p(x) m_p(z))^2 \, dz\, dx &\le \frac{C}{t^{2/3}\log (1/p)}.
\end{align*}
By \eqref{f3}, $M_p \ge (6t)^{2/3}$. Thus, $m_p$ is bounded by $(6t)^{-1/3}$, and 
\begin{equation}\label{fm}
\iint (f_p(x,z) - m_p(x) m_p(z))^2 \, dz\, dx \le \frac{C}{t^{2}\log (1/p)}.
\end{equation}
For each $p$, let $n_p:[0,1]\ra[0,(6t)^{-1/3}]$ be a step function (i.e.\ a function that is constant on intervals) such that 
\begin{equation}\label{mpnp}
\int (m_p(x)-n_p(x))^2 dx \le p. 
\end{equation}
(Such functions exist because we can approximate $m_p$ by a continuous function to any degree of accuracy by Lusin's Theorem \cite{rudin87}, and then approximate the continuous function by a step function.)

Let $\sigma_p$ be a measure preserving bijection of $[0,1]$ such that $n_p(\sigma_p x)$ is a non-increasing function. Such a bijection is easy to construct because $n_p$ is a step function. Let $\ell_p(x) := n_p(\sigma_p x)$ and $g_p(x,y) := f_p(\sigma_p x, \sigma_p y)$. By the monotonicity and uniform boundedness of $\ell_p$ there exists a sequence $\{p_i\}_{i\ge 1}$ decreasing to zero such that $\ell_{p_i}$ converges in $L^2$ to a limit function~$\ell$. Therefore by \eqref{fm} and \eqref{mpnp}, $g_{p_i} \ra g$ in $L^2$, where 
\[
g(x,y) :=\ell(x)\ell(y). 
\]
By this and \eqref{f1f}, $g$ is a $\{0,1\}$-valued function. It is not difficult to see from this and the non-negativity of $\ell$ that $\ell$ must also be $\{0,1\}$-valued. (If $\ell(x)\not\in \{0,1\}$ on some set of positive measure, then there may be a set $A$ of positive measure where $\ell(x)\in (0,1)$, or there may be a set $A$ of positive measure where $\ell(x)\in (1,\infty)$. In either case, $g(x,y) \not \in \{0,1\}$ on $A\times A$.)  Since $\ell$ is monotone decreasing, it follows that $\ell$ must be the indicator of an interval of the form $[0,b]$ for some $b\in [0,1]$. Lastly, \eqref{f3} implies that $\iint g(x,y) dxdy = (6t)^{2/3}$, and therefore $b = (6t)^{1/3}$. Consequently, $g= \chi_t$.

The above argument establishes that for any collection $\{f_p\}_{p>0}$ such that $f_p\in F^*_{p,t}$ for each $p$, there is a sequence $\{p_i\}_{i\ge 1}$ decreasing to zero such that $f_{p_i}\ra \chi_t$ in the cut metric. The same argument can be extended to show that for any sequence $\{f_{p_i}\}$ such that $p_i\ra 0$ and $f_{p_i} \in F^*_{p_i,t}$ for each $i$, there is a subsequence converging to $\chi_t$ in the cut metric. This proves the second assertion of the theorem. The last claim of the theorem follows from this and Theorem~\ref{triangle}. 
\end{proof}

\subsection{Open questions}
There are many questions that remain unresolved, even in the simple example of upper tails for triangle counts that has been  analyzed in this section. For instance, what is the set of optimal solutions of the variational problem \eqref{phidef} in the broken replica symmetry phase (i.e.\ where the optimizer is not a constant)? Is the solution unique in the quotient space $\mmm$, or can there exist multiple solutions? Is it possible to explicitly compute a nontrivial solution of \eqref{phidef} for at least some value of $(p,t)$? Is it possible to even numerically evaluate or approximate a solution using a computer? Does Theorem \ref{symm} characterize the full replica symmetric phase? If not, what is the exact phase transition boundary? What happens in the sparse case where $p$ and $t$ are both allowed to tend to zero? At the time of writing this paper, we do not know how to answer any of these questions. 

\vskip.2in
\noindent{\bf Acknowledgments.} The authors thank Amir Dembo for his crucial role in motivating this research and suggesting the formula for the rate function in the case of subgraph counts, Joel Spencer for hinting  that Szemer\'edi's lemma may be useful, and Persi Diaconis for pointing us to the work of Lov\'asz and coauthors (which helped in cutting down the size of the paper by a half). The authors also thank the referee for a very careful report.

\end{document}